\documentclass[a4paper]{article}
\usepackage{amsthm,amsfonts,amsmath,amssymb,units}
\usepackage[abbrev,nobysame]{amsrefs}
\usepackage[cp1251]{inputenc}
\usepackage[english]{babel}
\usepackage[final]{graphicx}
\usepackage{setspace}
\usepackage[12pt]{extsizes}
\begin{document}
\newcommand{\per}{{\rm per}}
\newcommand{\res}{{\rm res}}
\newcommand{\supp}{{\rm supp}}
\newtheorem{theorem}{Theorem}
\newtheorem{lemma}{Lemma}
\newtheorem{utv}{Proposition}
\newtheorem{svoistvo}{Property}
\newtheorem{sled}{Corollary}
\newtheorem{con}{Conjecture}
\newtheorem{zam}{Remark}
\newtheorem{quest}{Question}
\newtheorem{claim}{Claim}

\author{V. N. Potapov\footnote{\texttt{quasigroup349@gmail.com}}, A. A. Taranenko\footnote{Sobolev Institute of Mathematics, Novosibirsk, Russia; \texttt{taa@math.nsc.ru}}}
\title{Multidimensional convolution matrices and  perfect colorings of subspace hypergraphs applied for bent functions and related designs}
\date{February 16, 2026}

\maketitle

\begin{abstract}
The main aim of the present paper is to introduce new methods for the study of combinatorial designs related to bent functions. They are based on interpretations of convolution on finite abelian groups as multiplication by a multidimensional matrix and  designs as perfect colorings of subspace hypergraphs of $\mathbb{F}_2^n$. 

 We establish a correspondence between eigenfunctions of convolution matrices and perfect colorings of subspace hypergraphs,  show that perfect colorings of subspace hypergraphs admit a characterization in terms of convolution and that two-valued eigenfunctions of subspace hypergraphs correspond to perfect colorings. As applications, we represent  partial difference sets, bent and plateaued Boolean functions, spreads, and strong bent partitions of  $\mathbb{F}_2^n$ as eigenfunctions of convolution matrices and as perfect colorings of subspace hypergraphs. We also find some eigenvalues of convolution matrices over $\mathbb{F}_2^n$  and $\mathbb{F}_3^n$.

\textbf{Keywords:} perfect coloring of hypergraph, convolution,  eigenvalue and eigenfunction of multidimensional matrix, bent and plateaued functions, partial difference set, strong bent partition
 
\textbf{MSC2020:} 05C15; 15A18; 15A69; 05C50; 05B30; 05B10 
\end{abstract}

\section{Introduction}

Analytical and algebraic approaches are widely applied for the study of combinatorial designs. The analytical approach deals with designs via convolution or the Fourier transform of their indicator functions. Carlet's monograph~\cite{carle.boolforcrypt} on Boolean functions can be considered as an example of this methodology.
  
The graph-theoretical approach interprets designs as subsets of vertices in highly symmetrical graphs that have extremal or regularity properties. In the most common setting, designs correspond to perfect colorings (equitable partitions) of graphs. The concept of equitable partitions was introduced by Delsarte in \cite{dels.assch}. Perfect colorings of graphs play a significant role in coding theory and the theory of combinatorial designs. Applications of perfect colorings of several graph families for coding theory are surveyed in papers~\cite{our.perfcolorHam, BorRifaZin.compregcodes} and in Chapter~1 of book~\cite{ShiSole.CRCinDRG}.
  
The linear-algebraic approach employs tools from linear algebra and spectral theory of ($2$-dimensional) matrices to derive necessary conditions for the existence of designs. This approach is largely enabled by the property that the spectrum the adjacency matrix of a graph include the spectrum of the parameter (quotient) matrix of any its perfect coloring. Classical results in this direction are Lloyd's theorem for perfect codes~\cite{lloyd.perfcode}, the Fisher and Bruck--Ryser--Chowla inequalities for block designs, and the Bierbrauer--Friedman and Rao inequalities for orthogonal arrays.

The novelty of this paper is that we treat convolution as multiplication by a $3$-dimensional matrix. This point of view is well known in computer science and applied mathematics (see, for example,~\cite{Hack.tesorconv, PanVenLeib.lintomulti, RakhOsel.fastconv}), but appears to be uninvestigated in modern algebraic hypergraph theory and the theory of combinatorial designs. One of the main objectives of our work is to fill this gap. We also show that this correspondence allows some designs to be linked to eigenfunctions of the convolution matrix and to be represented as perfect colorings of a highly symmetric (subspace) hypergraph.

Several notions of eigenvalues and eigenfunctions for hypergraphs have been proposed in the literature. In this paper, we adopt the definition based on eigenvalues and eigenfunctions of the multidimensional adjacency matrix (tensor) of a hypergraph. This approach was introduced by~Lim \cite{lim.eigentensor} and further developed for symmetric tensors by Qi~\cite{qi.eigentensor}. It relies on the theory of determinants of multidimensional matrices and resultants of multilinear systems developed in book~\cite{GeKaZe.multdet} by Gelfand, Kapranov, and Zelevinsky.

The concept and properties of perfect colorings of hypergraphs were studied in depth by Taranenko \cite{my.hyperperfcolor}. To the best of our knowledge, paper~\cite{PotAv.hypercolor} (see also preprint ArXiv:2403.02904 for a corrected version) is the only work that directly investigates the relationship between designs and perfect colorings of hypergraphs. In that paper, it was initiated the representation of bent functions and partial difference sets as perfect colorings. In particular, it was shown that transversals in hypergraphs  are perfect colorings. The present study continues and extends this line of research, opening the possibility of deriving new conditions for the existence of designs with the help of the spectral theory of multidimensional matrices, and laying the groundwork for applying algebraic hypergraph theory to the study of combinatorial designs.

The paper is organised as follows. In Section~\ref{premsec}, we give the main definitions and necessary preliminary results. In particular, we define multidimensional convolution matrices and the subspace hypergraphs $\mathcal{H}_n$ and $\mathcal{D}_n$ for the space $\mathbb{F}_2^n$. We also recall some properties of perfect colorings of hypergraphs, eigenvalues of multidimensional matrices, and the Fourier transform.

The main part of the paper starts in Section~\ref{eigensec}, where we find some eigenvalues of the convolution matrices for additive groups of the spaces $\mathbb{F}_2^n$ and $\mathbb{F}_3^n$.  The spectral properties of the convolution matrix  of the  additive group of the space $\mathbb{F}_2^n$  are extremely important due to their relation to combinatorial designs: if the spectrum of the convolution matrix does not contain an eigenvalue  corresponding to some design, then such designs do not exist. We suppose that a complete description of the eigenspectrum of convolution matrices for $\mathbb{F}_2^n$  is an important open problem.

In Section~\ref{relationsec}, we establish general connections between convolution matrices, subspace hypergraphs, and perfect colorings. In particular, we show that  multiplication by the convolution matrix and multiplication by the adjacency matrix of a hypergraph are closely related. This property allows us to express some eigenvectors of one matrix by means of the other. Next, we prove that the definition of a perfect coloring of a subset hypergraph $\mathcal{H}_n$ can be given in terms of convolution. At the end of the section, we introduce totally regular hypergraphs, generalizing subspace graphs, and show that every two-valued eigenfunction of totally regular hypergrpahs is a perfect coloring.

Finally, Section~\ref{designsec} is devoted to applications of these methods to several designs closely related to bent functions. First, we prove that every partial difference set gives rise to an eigenfunction of the convolution matrix and that it is in one-to-one correspondence with a perfect coloring of a subspace hypergraph with certain parameters.  Next,  we show that plateaued and bent functions correspond to eigenfunctions of convolution matrices and to perfect colorings of subspace hypergraphs.  We conclude the section by proving that every $n/2$-spread in $\mathbb{F}_2^n$ is equivalent to a perfect coloring of a subspace hypergraph with certain parameters, and that every strong bent partition of $\mathbb{F}_2^n$   is a perfect coloring.

\section{Definitions and preliminaries} \label{premsec}

 A hypergraph $\mathcal{G}$ is a pair of sets $(X,E)$, where $X$ is called the set of \textit{vertices} and $E$ is the set of \textit{hyperedges}; each $e \in E$ is a (nonempty) subset of $X$.  A vertex $x \in X$ is said to be \textit{incident} to a hyperedge $e$ if  $x \in e$.  
 
 The \textit{degree} of a vertex $x \in X$ is the number of hyperedges $e$ containing $x$. If all vertices of $\mathcal{G}$ have  degree $r$, then the hypergraph $\mathcal{G}$ is said to be \textit{$r$-regular}.     A hypergraph $\mathcal{G}$ is \textit{$d$-uniform}  if every hyperedge of $\mathcal{G}$ consists of $d$ vertices. Simple graphs are exactly $2$-uniform hypergraphs. 

In what follows, we use the notation $\mathbb{F}_2^n$  for an $n$-dimensional vector space over the prime field $\mathbb{F}_2$, as well as for the additive group of this space. The neutral element of $\mathbb{F}_2^n$ with respect to addition is $\overline{0} = (0, \ldots, 0)$. A subset $L \subseteq \mathbb{F}_2^n$  is a   \textit{linear subspace} if for all $x, y \in L$ we have $x+ y \in L$, and a subset $U \subseteq \mathbb{F}_2^n$ is an \textit{affine subspace} if there exists a linear subspace $L$ and $x \in \mathbb{F}_2^n$ such that $U = x + L$. 

Let the $3$-uniform \textit{subspace hypergraph $\mathcal{H}_n$} be the hypergraph with vertex set $\mathbb{F}^n_2 \setminus\{\overline{0}\}$ and hyperedges $\{ x,y, z\}$ such that $x+y+z=\overline{0}$.   We also define the $4$-uniform \textit{subspace hypergraph $\mathcal{D}_n$}  with the vertex set  $ \mathbb{F}^n_2 $  and  hyperedges  $\{ x,y,z,v \}$  such that $x + y+z+v = \overline{0}$.   Note that the hyperedges of $\mathcal{H}_n$ correspond to $2$-dimensional linear subspaces of the vector space $ \mathbb{F}^n_2 $ without $\overline{0}$, while  the  hyperedges of $\mathcal{D}_n$ correspond to $2$-dimensional affine subspaces of $\mathbb{F}^n_2$.

A \textit{$d$-dimensional matrix $A$ of order $n$} is an array $(a_{\textbf{x}})_{\textbf{x} \in X^d}$  whose entries $a_{\textbf{x}} \in\mathbb R$  are indexed by $d$-tuples $\textbf{x}  = (x_1, \ldots, x_d )$ from the index set $X^d$, where each $x_i$ belongs to a set $X$ of cardinality $n$.  $2$-dimensional matrices are standard matrices used in linear algebra, $1$-dimensional matrices are vectors. We often regard $1$-dimensional matrices (vectors) as functions $f : X \rightarrow \mathbb{R}$ and use notation $f(x)$ instead of $f_x$.

The \textit{adjacency matrix} $M(\mathcal{G})$ of a $d$-uniform hypergraph $\mathcal{G}  (X,E)$ on $n$ vertices is a $d$-dimen\-sio\-nal matrix of order $n$ in which entries $m_{\textbf{x}} = \frac{1}{(d-1)!}$  if $\textbf{x} = (x_1, \ldots, x_d)$ is a hyperedge in $\mathcal{G}$, and all other entries  $m_{\textbf{x}}$ are equal to $0$.  

Let us introduce several operations for multidimensional matrices. Given a $d$-dimensional matrix $A$ of order $n$  and $d-1$ vectors (functions) $f_1, \ldots, f_{d-1}$ of length $n$,  define the \textit{combined product} $A \cdot (f_1, \ldots, f_{d-1})$ to be the vector $f$ of length $n$ with entries
$$ f(x) = \sum\limits_{x_1, \ldots, x_{d-1} = 1}^{n}  a_{x, x_1, \ldots, x_{d-1}}   f_1(x_1) \cdots f_{d-1} (x_{d-1}). $$
Note that if for all $2 \leq i,j \leq d$ entries of the matrix $A$ remain unchanged under transpositions $(x_i, x_j)$ in indices, then the combined product  $A \cdot (f_1, \ldots, f_{d-1})$ does not depend on the order of the vectors $f_1, \ldots, f_{d-1}$.   In particular, this holds for the adjacency matrices of hypergraphs. 

 Let $A$ be a $d$-dimensional matrix of order $n$  and let $B$ be a $t$-dimensional matrix of the same order.   Define the \textit{product} $A \circ B$ to be the $((d -1) (t -1) +1)$-dimensional  matrix $C$ of order $n$ with entries 
 $$c_{x, \textbf{y}^2, \ldots, \textbf{y}^{d}} = \sum\limits_{x_2= 1}^n \cdots \sum\limits_{x_d= 1}^n  a_{x, x_2, \ldots, x_{d}} \cdot b_{x_2, \textbf{y}^2}\cdots b_{x_{d}, \textbf{y}^{d}},$$
 where indices $\textbf{y}^{2}, \ldots, \textbf{y}^{d} \in X^{t-1}$ and $x \in X$.  In particular, if $A$ is a $d$-dimensional matrix of order $n$ and $f$ is a vector of length $n$, then the combined product $A \cdot (f, \ldots, f)$ of $d-1$ copies of $f$ coincides with the product $A \circ f$.
 
We say that $\theta \in \mathbb{C}$ is an \textit{eigenvalue} of a $d$-dimensional matrix of order $n$  if there exists a \textit{eigenvector} (or \textit{eigenfunction}) $f \neq (0, \ldots, 0)$ such that $A \circ f = \theta (\mathbb{I} \circ f) $. Here $\mathbb{I}$ is the \textit{$d$-dimensional identity matrix}, whose entries satisfy $i_{\textbf{x}} = 1$ if $x_1 = \cdots = x_d$ and $i_{\textbf{x}} = 0$ otherwise.   In particular,  $\mathbb{I} \circ f$ is a function $f^{d-1}$. 

If $H$  is a $d$-uniform hypergraph with $d$-dimensional adjacency matrix $M$, then the eigenvalues of $M$ are called the \textit{eigenvalues} of  $H$, and the eigenvectors of $M$ are called the \textit{eigenvectors} of $H$.
 
Following~\cite{my.hyperperfcolor}, we provide several results on perfect colorings of hypergraphs. 

Let $\mathcal{G} (X,E)$ be a hypergraph.  A surjective function $f: X \rightarrow  C$, where $|C| = k$,  is called a \textit{coloring}  of $\mathcal{G}$ into $k$ colors (or a \textit{$k$-coloring}). Unless otherwise specified, the set of colors $C$ is $\{ 1, \ldots, k\}$.   A coloring $f$ determines a partition $P = (P_1, \ldots, P_k)$ of the vertex set $X$, where $P_i = \{ x \in X : f(x) = i\}$. 

 Given a coloring $f$ and a hyperedge $e$, the \textit{color range} $f (e)$ is the multiset of colors of vertices incident to $e$: $f (e) = \{f (x)|x \in e\}$.  A coloring $f$ of a hypergraph $\mathcal{G}$ is said to be \textit{perfect} if for every pair of vertices $x$ and $y$ of the same color, the multisets $\{ f(e) | x \in e  \}$ and   $\{ f(e) | y \in e  \}$ of color ranges of incident hyperedges coincide.  The partition  $(P_1, \ldots, P_k)$ corresponding to a perfect coloring $f$ is called an equitable partition.   Equivalently, a coloring is perfect if and only if, for every color range $\textbf{i} = \{ i_1, i_2, \ldots, i_d \}$, the number of hyperedges of color range $\mathbf{i}$ incident to a vertex $x$ of color $i_1$ does not depend on the choice of $x$. We denote these numbers by $v_{i_1,\textbf{i}}$.

For every $k$-coloring $f$ of a hypergraph $\mathcal{G} (X,E)$ on $n$ vertices, we define the \textit{color matrix} $F$ of size $n \times k$ such that the $(x,i)$-entry of $F$ equals  $1$  if $f(x) = i$ and  equals  $0$ otherwise.  In particular, the $i$-th column $F_i$ of  $F$ is the indicator function of the color $i$, $F = (F_1, \ldots, F_{k})$. We often identify  a coloring $f$ with its color matrix $F$.  

 The  following matrix characterization of perfect colorings of hypergraphs and the definition of the parameter matrix of a perfect coloring were given in~\cite{my.hyperperfcolor}.   

\begin{theorem} \cite[Theorem 3.4]{my.hyperperfcolor}  \label{adjparam}
Let $\mathcal{G}$ be a $d$-uniform hypergraph with $d$-dimensional adjacency matrix $M$. Then a coloring  $F$ of $\mathcal{G}$  into $k$ colors is perfect  if and only if there exists the $d$-dimensional parameter matrix $S$ of order $k$ such that $M \circ F = F \circ S $. Moreover,  the entries $s_{\textbf{i}}$, $\textbf{i} = (i_1, i_2, \ldots, i_{d} )$, of $S$ are given by
$$s_{\textbf{i}}  = v_{i_1,\textbf{i}} \cdot {d-1 \choose  d_1,  \ldots, d_k}^{-1},$$
where every color $j$ appears in the multiset $ \{i_2, \ldots, i_d \}$ exactly $d_j$ times, and  ${d-1 \choose  d_1,  \ldots, d_k} = \frac{(d-1)!}{d_1! \cdots d_k!}$ is the multinomial coefficient.
\end{theorem}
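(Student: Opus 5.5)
The plan is to compute both sides of $M \circ F = F \circ S$ entrywise and compare them. Here $F$ is an $n\times k$ matrix, i.e. a $2$-dimensional matrix with rows indexed by vertices and columns by colors. Strictly speaking, $M\circ F$ is a $d$-dimensional array indexed by $(x, i_2,\ldots,i_d)$, where $x$ is a vertex and the $i_j$ are colors. Likewise $F\circ S$ has entries $\sum_{i} f_{x,i}\, s_{i,i_2,\ldots,i_d} = s_{f(x),i_2,\ldots,i_d}$, because row $x$ of $F$ has a single $1$, in column $f(x)$.

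Next I compute the left-hand side. By the definition of the product,
$$(M\circ F)_{x,i_2,\ldots,i_d}=\sum_{x_2,\ldots,x_d} m_{x,x_2,\ldots,x_d}\, F_{x_2,i_2}\cdots F_{x_d,i_d}.$$
This counts ordered tuples $(x_2,\ldots,x_d)$ with $f(x_j)=i_j$ such that $\{x,x_2,\ldots,x_d\}$ is a hyperedge, each weighted by $1/(d-1)!$.

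Now fix a hyperedge $e\ni x$. The orderings of $e\setminus\{x\}$ realizing the prescribed color sequence $(i_2,\ldots,i_d)$ exist only if the multiset of colors on $e\setminus\{x\}$ equals $\{i_2,\ldots,i_d\}$. In that case there are exactly $d_1!\cdots d_k!$ of them, since we may permute the vertices within each color class. Hence
$$(M\circ F)_{x,\mathbf i}=\frac{d_1!\cdots d_k!}{(d-1)!}\; N(x,\mathbf i),$$
where $N(x,\mathbf i)$ is the number of hyperedges containing $x$ whose color range, with $x$ removed, is $\{i_2,\ldots,i_d\}$. The point that needs care is exactly this multinomial bookkeeping. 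One also has to treat entries of $M$ with repeated indices correctly: they vanish, since hyperedges have $d$ distinct vertices.

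The equivalence then follows. First suppose $f$ is perfect. For $f(x)=i_1$ we have $N(x,\mathbf i)=v_{i_1,\mathbf i}$, which depends only on $i_1$ and the multiset. Define $S$ by the stated formula for every $(i_1,\ldots,i_d)$; this gives equality. The formula is consistent because $N(x,\mathbf i)$ depends only on the multiset $\{i_2,\ldots,i_d\}$, so it is unchanged by reordering $i_2,\ldots,i_d$. When the color range does not occur, the relevant $v$ is $0$.

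Conversely, suppose $M\circ F=F\circ S$ for some $S$. Then $N(x,\mathbf i)={d-1\choose d_1,\ldots,d_k}\, s_{f(x),\mathbf i}$ depends only on $f(x)$. This is exactly the counting characterization of perfect colorings, and it also forces $S$ to have the stated form.
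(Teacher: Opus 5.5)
The paper gives no proof of this statement; it is quoted from Taranenko's earlier work. Your direct entrywise comparison of $(F\circ S)_{x,i_2,\ldots,i_d}=s_{f(x),i_2,\ldots,i_d}$ with $(M\circ F)_{x,i_2,\ldots,i_d}=\frac{d_1!\cdots d_k!}{(d-1)!}N(x,\mathbf i)$ is the natural argument and is correct; just state explicitly that colorings are surjective, so every color $i_1$ occurs at some vertex and every entry $s_{i_1,i_2,\ldots,i_d}$ is genuinely forced to equal the stated value.
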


This theorem implies the following necessary condition on the existence of perfect colorings in hypergraphs: if there exists a perfect coloring of a hypergraph with the parameter matrix $S$,  then each  eigenvalue of $S$ is an eigenvalue of the adjacency matrix of the hypergraph .

\begin{utv} \cite[Theorem 3.11]{my.hyperperfcolor} \label{kvaluedeigen}
Let  $\mathcal{G}$ be a $d$-uniform hypergraph with adjacency matrix $M$.
If   $F$ is a perfect coloring of $\mathcal{G}$ with the parameter matrix $S$, then for every eigenvalue $\theta$ and corresponding eigenvector $g$ for the matrix $S$,   there exists the same eigenvalue $\theta$ and eigenvector $Fg$  of  the matrix $M$. 
\end{utv}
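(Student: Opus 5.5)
We prove Proposition~\ref{kvaluedeigen} by a direct computation. It rests on two facts: the identity $M \circ F = F \circ S$ from Theorem~\ref{adjparam}, and the observation that multiplying by a $0/1$ color matrix $F$ commutes with taking entrywise powers.

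Let $g$ be a vector of length $k$ with $S \circ g = \theta g^{d-1}$, $g \neq 0$, and put $h = Fg$, so $h(x) = g(f(x))$. First, $h \neq 0$: the coloring $f$ is surjective, so every color $i$ with $g(i) \neq 0$ occurs at some vertex $x$, and there $h(x) = g(i) \neq 0$.

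Next we show that $M \circ h = (M \circ F) \circ g$, i.e. that the product $\circ$ is associative in this setting. Expanding the definitions,
$$(M\circ h)(x) = \sum_{x_2,\ldots,x_d} m_{x,x_2,\ldots,x_d} \prod_{j=2}^d \sum_{i_j} F_{x_j i_j} g(i_j).$$
Expanding the product of sums and regrouping gives
$$\sum_{i_2,\ldots,i_d} \Big(\sum_{x_2,\ldots,x_d} m_{x,x_2,\ldots,x_d} F_{x_2 i_2}\cdots F_{x_d i_d}\Big) g(i_2)\cdots g(i_d),$$
and the bracket is exactly the $(x,i_2,\ldots,i_d)$ entry of $M\circ F$ (here $F$ is viewed as an $n\times k$ two-dimensional matrix, so $M \circ F$ is $d$-dimensional with first index a vertex and the remaining indices colors). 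Hence $M\circ h = (M\circ F)\circ g$.

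By Theorem~\ref{adjparam}, $(M\circ F)\circ g = (F\circ S)\circ g$. Expanding $(F\circ S)\circ g$ by the same argument, its $x$-entry is
$$\sum_{i} F_{x i}\sum_{i_2,\ldots,i_d} s_{i,i_2,\ldots,i_d}\, g(i_2)\cdots g(i_d) = \big(F(S\circ g)\big)(x).$$

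Finally, by the eigen-equation $S\circ g=\theta g^{d-1}$, so
$$M\circ h = F(\theta g^{d-1}) = \theta\, (Fg)^{d-1}.$$
Each row of $F$ has exactly one entry equal to $1$, so $(Fg^{d-1})(x) = g(f(x))^{d-1} = h(x)^{d-1}$, i.e. $F(g^{d-1}) = (Fg)^{d-1}$ with the power taken entrywise. This yields $M\circ h = \theta(\mathbb{I}\circ h)$, so $\theta$ is an eigenvalue of $M$ with eigenvector $h = Fg$.

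The only delicate step is the bookkeeping of indices in the mixed products $M\circ F$ and $F\circ S$, where $F$ has rows indexed by vertices and columns by colors. The regrouping of sums is finite, so it is routine once the index sets are written out explicitly.
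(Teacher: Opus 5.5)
Your proof is correct and takes essentially the same route as the paper. The result is only cited here, but the discussion before Theorem~\ref{eigencolor} uses exactly your chain $M\circ(Fg) = (M\circ F)\circ g = (F\circ S)\circ g = F(S\circ g) = \theta F(g^{d-1}) = \theta\,(\mathbb{I}\circ Fg)$; your explicit index bookkeeping for the regrouping step and your check that $Fg\neq 0$ via surjectivity of the coloring supply the details the citation leaves implicit.
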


Finally, we introduce the convolution matrix and recall some properties of the Fourier transform.

Let $G$ be an abelian group of order $n$.  For a subset $V \subseteq G$, let $\chi_V$ denote the \textit{indicator function} of $V$:   $\chi_V (x) = 1$ if $x \in V$ and  $\chi_V (x) = 0$ otherwise. If $V = \{ x\}$, then we write $\chi_{x}$ instead of $\chi_{\{ x\}}$. Let  $\textbf{1}$ be the all-one function on $G$: $\textbf{1} (x) = 1$ for all $x \in G$. 

Given functions $f, g: G \rightarrow \mathbb{C}$,  their \textit{convolution} is defined by
$$ (f * g) (x) = \sum\limits_{y \in G} f(y) g(x-y).$$

Define the  \textit{convolution matrix $A_G$} of the group $G$  to be the $3$-dimensional   $(0,1)$-matrix   of order $n$ with entries $a_{x,y,z} = 1$  if $x=y+z$ and $a_{x,y,z} = 0$ otherwise. Similarly, we define  the  \textit{$t$-times convolution matrix $A^{(t)}_G$} of the group $G$ to be the $(t+2)$-dimensional   $(0,1)$-matrix of order $n$ such that $a_{\textbf{x}} = 1$ if and only if $x_1=x_2+ \cdots +x_{t+2}$.  In particular, the convolution  matrix is the   $1$-times  convolution matrix, that is $A_G^{(1)} = A_G$.
 We also note that the set of nonzero entries of the matrix $A^{(t)}_{G}$ forms a linear $n$-ary MDS code of length $t+2$ and distance $2$.  If $G$ is the additive group of the space $\mathbb{F}_2^n$, we denote the convolution matrices as $A$ and $A^{(t)}$ instead of $A_{\mathbb{F}_2^n}$ and $A^{(t)}_{\mathbb{F}_2^n}$. 

A function $f: \mathbb{F}_2^n \rightarrow \mathbb{F}_2$ is called a Boolean function.  Define the Fourier transform of a Boolean function $f$ to be
$$\widehat{f} (x) = \frac{1}{2^{n/2}} \sum\limits_{z \in \mathbb{F}_2^n} (-1)^{f(z) + (x,z)}, $$
where $(x,z) = x_1 z_1 + \cdots + x_n z_n$. 

We will also need the following well-known properties of the Fourier transform that can be found, for example, in~\cite{carle.boolforcrypt}.

\begin{utv}   \label{fourierutv} \item
\begin{enumerate}
\item $\widehat{\widehat{f}} = f$ for every Boolean function $f: \mathbb{F}_2^n \rightarrow \mathbb{F}_2$. 
\item  $\widehat{f + g } = \widehat{f } + \widehat{g}  $ for all Boolean functions  $f, g: \mathbb{F}_2^n \rightarrow \mathbb{F}_2$.
\item  $\widehat{f * g } = 2^{n/2} (\widehat{f} \cdot \widehat{g})$ for all Boolean functions  $f, g: \mathbb{F}_2^n \rightarrow \mathbb{F}_2$.
\item $\widehat{\textbf{1}} = 2^{n/2} \chi_{\overline{0}}$ and $\widehat{\chi_{\overline{0}}} = \frac{1}{2^{n/2}} \textbf{1}$ for functions $\textbf{1}$ and $\chi_{\overline{0}}$ over $\mathbb{F}_2^n$.
\item If $U \subseteq \mathbb{F}_2^n$ is a linear subspace of dimension $k$ in $\mathbb{F}_2^n$, then $\widehat{\chi_U} = 2^{k - n/2} \chi_{U^\perp}$, where $U^\perp = \{ x \in \mathbb{F}_2^n : (x,y) = 0 \mbox { for all } y \in Y\}$. 
\end{enumerate}
\end{utv}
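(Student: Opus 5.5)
The plan is to reduce all five items to one orthogonality identity. Items 2–5 involve sums, convolutions and the functions $\textbf{1}$, $\chi_{\overline{0}}$, $\chi_U$, which are real-valued, so I will work with the normalized Walsh--Hadamard transform of an arbitrary real function $g:\mathbb{F}_2^n\to\mathbb{R}$,
$$\widehat{g}(x)=\frac{1}{2^{n/2}}\sum_{z\in\mathbb{F}_2^n} g(z)(-1)^{(x,z)}.$$
A Boolean function $f$ enters through its sign function $(-1)^{f}$, which recovers the displayed definition. In item 2, $f+g$ must then be read as the sum of real-valued functions, since with addition over $\mathbb{F}_2$ the identity fails. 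In item 5, the $Y$ in the definition of $U^\perp$ should read $U$.

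The basic tool is orthogonality of characters:
$$\sum_{z\in\mathbb{F}_2^n}(-1)^{(x,z)}=2^n\chi_{\overline 0}(x).$$
To prove it, note that if $x\neq\overline 0$, then $z\mapsto(-1)^{(x,z)}$ is a nontrivial homomorphism to $\{\pm1\}$. Its kernel is a hyperplane, so the two values occur equally often and the sum is $0$.

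Items 1, 2 and 4 follow quickly from this.
\begin{enumerate}
\item Expand $\widehat{\widehat g}(x)=2^{-n}\sum_{y}\sum_z g(z)(-1)^{(y,x+z)}$. Interchange the sums and apply orthogonality in $y$; only $z=x$ survives, giving $g(x)$.
\item Linearity is immediate from the definition.
\item[4.] We have $\widehat{\textbf{1}}(x)=2^{-n/2}\cdot 2^n\chi_{\overline 0}(x)$ by orthogonality, and $\widehat{\chi_{\overline 0}}(x)=2^{-n/2}(-1)^{0}$.
\end{enumerate}

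For item 3, expand
$$\widehat{f*g}(x)=2^{-n/2}\sum_z\sum_y f(y)g(z-y)(-1)^{(x,z)}.$$
Substitute $z=y+w$ and use $(x,y+w)=(x,y)+(x,w)$ to factor the double sum as
$$\Bigl(\sum_y f(y)(-1)^{(x,y)}\Bigr)\Bigl(\sum_w g(w)(-1)^{(x,w)}\Bigr)=2^{n}\,\widehat f(x)\,\widehat g(x).$$
Multiplying by the prefactor $2^{-n/2}$ gives $\widehat{f*g}=2^{n/2}\,\widehat f\cdot\widehat g$.

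For item 5, write $\widehat{\chi_U}(x)=2^{-n/2}\sum_{z\in U}(-1)^{(x,z)}$. If $x\in U^\perp$, every term equals $1$ and the sum is $|U|=2^k$. Otherwise $z\mapsto(-1)^{(x,z)}$ restricted to $U$ is a nontrivial character of the group $U$, so the sum vanishes by the same kernel argument applied inside $U$. This gives $\widehat{\chi_U}=2^{k-n/2}\chi_{U^\perp}$.

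The computations are all routine. The only real obstacle is fixing a consistent reading of the statement, namely extending the transform to real functions with the sign-function convention for Boolean $f$; once that is done, items 1–5 follow from the single orthogonality identity.
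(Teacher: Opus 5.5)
Your proof is correct. The paper gives no argument of its own here; it simply quotes these facts as well known from Carlet's monograph, and your derivation of all five items from the single orthogonality relation $\sum_{z}(-1)^{(x,z)}=2^n\chi_{\overline{0}}(x)$ is the standard proof that citation points to. Your reading of the statement is also how the paper itself uses these properties later. That reading is: the transform $\widehat{g}(x)=2^{-n/2}\sum_z g(z)(-1)^{(x,z)}$ on arbitrary real-valued $g$, with $f+g$ a real sum rather than a sum over $\mathbb{F}_2$, and $Y$ read as $U$. The paper relies on it, for example, when it transforms $\chi_{P_i}$, $\textbf{1}$, and $\chi_U+\sum_{i\in I}\chi_{P_i}-\sum_{j\in J}\chi_{P_j}$.
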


\section{Eigenvalues of convolution matrices of $\mathbb{F}_2^n$ and $\mathbb{F}_3^n$} \label{eigensec} 

In this section, we find some eigenvalues of  the convolution matrices of the groups $\mathbb{F}_2^n$ and $\mathbb{F}_3^n$ with the help of their characteristic polynomials and the Kronecker product. 

Qi~\cite{qi.eigentensor}  proved that for every $d$-dimensional matrix $A$ of order $n$, there exists a \textit{characteristic polynomial} $\varphi_{A}$ of degree $n(d-1)^{n-1}$ such that a number $\theta \in \mathbb{C}$ is an eigenvalue of $A$ if and only if $\theta$ is a root of $\varphi_{A}$. The existence of such a polynomial is based on  the theory of determinants of multidimensional matrices and resultants of multilinear systems developed by Gelfand, Kapranov, and Zelevinsky  in~\cite{GeKaZe.multdet}. 

If $A$ is a $d$-dimensional matrix of order $n_1$ and $B$ is a $d$-dimensional  matrix of order $n_2$, then the \textit{Kronecker product} $A\otimes B$ of matrices $A$ and $B$ is the $d$-dimensional matrix $C$ of order $n_1 n_2$ with entries $c_{\textbf{z}} = a_{\textbf{x}} b_{\textbf{y}},$ where  $z_i = (x_i - 1) n_2 + y_i$ for each $i = 1, \ldots, d$. In particular, if $A$ and $B$ are matrices whose entries are $0$ and $1$, then their Kronecker product is also a $(0,1)$-matrix $C$ whose $1$-entries have indices $( (x_1 - 1) n_2 + y_1, \ldots, (x_d - 1) n_2 + y_d )$ for which $a_{x_1, \ldots, x_d} = 1$ and  $b_{y_1, \ldots, y_d} = 1$. It can be verified by induction that  the convolution matrix $A_{\mathbb{F}_p^n}$ of   $\mathbb{F}_p^n$ is  the Kronecker product of $n$ copies of the convolution matrix $A_{\mathbb{F}_p}$.

The following result on eigenvalues of the Kronecker product of matrices was proved in~\cite{shao.tensprod}.

\begin{utv} \cite[Theorem 3.4]{shao.tensprod} \label{eigenKronprod}
Let $A$ be a $d$-dimensional matrix of order $n_1$ and let $B$ be a $d$-dimensional  matrix of order $n_2$. If $\theta$ is an eigenvalue of $A$ and $\theta'$ is an eigenvalue of $B$, then $\theta \theta'$ is an eigenvalue of the Kronecker product $A \otimes B$.
\end{utv}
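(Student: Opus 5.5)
The plan is to exhibit an explicit eigenvector of $A \otimes B$: the Kronecker (tensor) product of eigenvectors of $A$ and $B$. Let $f \neq 0$ be a (possibly complex) vector of length $n_1$ with $A \circ f = \theta f^{d-1}$, and let $g \neq 0$ be a vector of length $n_2$ with $B \circ g = \theta' g^{d-1}$. Every index $z \in \{1, \ldots, n_1 n_2\}$ can be written uniquely as $z = (x-1)n_2 + y$ with $1 \leq x \leq n_1$ and $1 \leq y \leq n_2$. Using this, define $h = f \otimes g$ by $h(z) = f(x) g(y)$. Then $h \neq 0$: choose $x$ with $f(x) \neq 0$ and $y$ with $g(y) \neq 0$.

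The main step is to compute $(A \otimes B) \circ h$ at an index $z = (x-1)n_2 + y$. By definition,
$$((A\otimes B)\circ h)(z) = \sum_{z_2, \ldots, z_d} c_{z, z_2, \ldots, z_d}\, h(z_2) \cdots h(z_d).$$
Each $z_i$ corresponds bijectively to a pair $(x_i, y_i)$, and then $c_{z, z_2, \ldots, z_d} = a_{x, x_2, \ldots, x_d}\, b_{y, y_2, \ldots, y_d}$. So the single sum over $(z_2, \ldots, z_d)$ becomes a double sum over $(x_2, \ldots, x_d)$ and $(y_2, \ldots, y_d)$. The summand is
$$a_{x,x_2,\ldots,x_d}\, f(x_2)\cdots f(x_d) \cdot b_{y,y_2,\ldots,y_d}\, g(y_2)\cdots g(y_d),$$
which factorizes, so the sum splits as a product:
$$((A\otimes B)\circ h)(z) = (A\circ f)(x)\,(B\circ g)(y) = \theta f(x)^{d-1}\cdot \theta' g(y)^{d-1} = \theta\theta'\, h(z)^{d-1}.$$
Since $h^{d-1} = \mathbb{I} \circ h$, this says $(A\otimes B)\circ h = \theta\theta'(\mathbb{I}\circ h)$. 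Hence $\theta\theta'$ is an eigenvalue of $A \otimes B$ with eigenvector $f \otimes g$.

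The only point requiring care is the bookkeeping in the middle step. One must check that the map $z_i \mapsto (x_i, y_i)$ is a bijection from $\{1, \ldots, n_1 n_2\}$ onto $\{1, \ldots, n_1\} \times \{1, \ldots, n_2\}$, which is exactly the index convention in the definition of the Kronecker product. This is what allows the $(d-1)$-fold sum to split into a product of two independent sums.
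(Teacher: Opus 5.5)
Your proof is correct. The paper does not prove this statement itself but quotes it from Shao, whose argument is the same as yours: the eigenvector $f\otimes g$ is handled via the mixed-product identity $(A\otimes B)\circ(f\otimes g) = (A\circ f)\otimes(B\circ g)$, which is exactly the factorization of the $(d-1)$-fold sum you verify by hand, giving $(A\otimes B)\circ(f\otimes g)=\theta\theta'\,(f\otimes g)^{d-1}$ with $f\otimes g\neq 0$.
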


Using this property, we find some eigenvalues of the convolution matrix $A_{\mathbb{F}_2^n}$.

 \begin{theorem} \label{F2nvalues}
Let $A_{\mathbb{F}_2^n}$  be   the  $3$-dimensional convolution matrix of   $\mathbb{F}_2^n$.  Then for all nonnegative integers $k_0, k_1, k_2$ with $k_0 + k_1 + k_2 \leq n $, the value $2^{k_0 - k_1 - k_2} (-1 + i\sqrt{7})^{k_1} (-1 - i\sqrt{7})^{k_2} $ is an eigenvalue of  $A_{\mathbb{F}_2^n}$.
\end{theorem}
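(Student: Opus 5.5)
The plan is to compute the full set of eigenvalues of the $3$-dimensional convolution matrix $A_{\mathbb{F}_2}$ of order $2$ directly, and then pass to $\mathbb{F}_2^n$. For that second step I use the remark above that $A_{\mathbb{F}_2^n}$ is the Kronecker product of $n$ copies of $A_{\mathbb{F}_2}$, together with Proposition~\ref{eigenKronprod}.

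\emph{Eigenvalues of $A_{\mathbb{F}_2}$.} Write $f=(a,b)$ with $a=f(0)$ and $b=f(1)$. The entries of $A_{\mathbb{F}_2}\circ f$ are $(f*f)(x)=\sum_y f(y)f(x-y)$, and $\mathbb{I}\circ f=f^2$. So the eigen-equation becomes the system
$$a^2+b^2=\theta a^2,\qquad 2ab=\theta b^2 .$$
\begin{itemize}
\item If $b=0$, then $a\neq 0$ and $\theta=1$.
\item If $a=0$, the first equation forces $b=0$, which is excluded.
\item If $a,b\neq 0$, normalize $b=1$. The second equation gives $\theta=2a$, and the first becomes $2a^3-a^2-1=0$, i.e. $(a-1)(2a^2+a+1)=0$.
\begin{itemize}
\item The root $a=1$ gives $\theta=2$.
\item The roots $a=(-1\pm i\sqrt7)/4$ give $\theta=(-1\pm i\sqrt7)/2$.
\end{itemize}
\end{itemize}
Hence $1$, $2$, $(-1+i\sqrt7)/2$ and $(-1-i\sqrt7)/2$ are eigenvalues of $A_{\mathbb{F}_2}$, each with an explicit eigenvector.

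\emph{Induction via Kronecker products.} By induction on $n$, using Proposition~\ref{eigenKronprod} with $A=A_{\mathbb{F}_2^{n-1}}$ and $B=A_{\mathbb{F}_2}$, every product $\theta_1\cdots\theta_n$ with each $\theta_j\in\{1,2,(-1\pm i\sqrt7)/2\}$ is an eigenvalue of $A_{\mathbb{F}_2^n}$. The Kronecker identification is up to the fixed reindexing of $\mathbb{F}_2^n$, which does not affect eigenvalues.

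\emph{Collecting the factors.} Take $k_0$ factors equal to $2$, $k_1$ equal to $(-1+i\sqrt7)/2$, $k_2$ equal to $(-1-i\sqrt7)/2$, and the remaining $n-k_0-k_1-k_2\ge 0$ equal to $1$. The product is
$$2^{k_0}\Bigl(\tfrac{-1+i\sqrt7}{2}\Bigr)^{k_1}\Bigl(\tfrac{-1-i\sqrt7}{2}\Bigr)^{k_2}=2^{k_0-k_1-k_2}(-1+i\sqrt7)^{k_1}(-1-i\sqrt7)^{k_2},$$
which is the claimed value.

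The only real work is the base-case computation: solving the $2\times 2$ polynomial system without missing a case, in particular excluding $a=0$ and factoring the cubic. The rest is a direct application of Proposition~\ref{eigenKronprod}.
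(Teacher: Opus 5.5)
Your proof is correct and has the same architecture as the paper's: compute the eigenvalues of the order-$2$ matrix $A_{\mathbb{F}_2}$, then apply Proposition~\ref{eigenKronprod} to the $n$-fold Kronecker product. The two proofs differ only in the base case.

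The paper gets the eigenvalues of $A_{\mathbb{F}_2}$ as the roots of its characteristic polynomial $(1-\theta)(2-\theta)(\theta^2+\theta+2)$. It computes this polynomial from the Morozov--Shakirov formula for the resultant of a $2\times2\times2$ system, together with Qi's identification of eigenvalues with roots of that resultant.

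You instead solve $a^2+b^2=\theta a^2$, $2ab=\theta b^2$ by hand. The system is homogeneous, so normalizing $b=1$ is legitimate, and your case split is exhaustive. You therefore recover exactly the same four eigenvalues without any resultant machinery. You also get explicit eigenvectors $(1,0)$, $(1,1)$, $\bigl((-1\pm i\sqrt7)/4,1\bigr)$. Since $(A\otimes B)\circ(x\otimes y)=(A\circ x)\otimes(B\circ y)$, these give explicit product-form eigenfunctions of $A_{\mathbb{F}_2^n}$, for instance $\chi_{\overline{0}}$ for $\theta=1$ and $\textbf{1}$ for $\theta=2^n$. The paper's argument does not exhibit these.

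What the paper's route adds is the full characteristic polynomial, i.e.\ multiplicity information, which this theorem does not need.
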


\begin{proof}
Recall that the convolution matrix $A_{\mathbb{F}_2^n}$ is the $n$-times Kronecker product of the matrix $A_{\mathbb{F}_2}$.  The  convolution matrix $A_{\mathbb{F}_2}$ of $\mathbb{F}_2$  is a $3$-dimensional matrix of order $2$ of the form
$$A_{\mathbb{F}_2} = \left( \begin{array}{cc|cc} 
1 & 0 & 0 & 1 \\ 
0 & 1 & 1 & 0
\end{array} \right).$$
Using the calculation of Morozov and Shakirov~\cite{MorShak.resultform} for the resultant  of a system of equations  given by an arbitrary  $3$-dimensional matrix of oder $2$, we find  the characteristic polynomial $\varphi (\theta) = Res (A - \theta \mathbb{I})$ of  $A_{\mathbb{F}_2}$:
$$\varphi (\theta) =  \theta^4 - 2 \theta^3 + \theta^2 - 4 \theta + 4 = (1 - \theta) (2 - \theta) (\theta^2 + \theta + 2).$$

Consequently, the   matrix $A_{\mathbb{F}_2}$ has eigenvalues $1$, $2$, and $\frac{1}{2} (-1 \pm i \sqrt{7})$. Thus, by Proposition~\ref{eigenKronprod}, the convolution matrix $A_{\mathbb{F}_2^n}$ has eigenvalues  $2^{k_0 - k_1 - k_2} (-1 + i\sqrt{7})^{k_1} (-1 - i\sqrt{7})^{k_2} $  for all nonnegative integers $k_0, k_1, k_2$ such that  $k_0 + k_1 + k_2 \leq n $.
\end{proof}

Acting similarly, we find some eigenvalues of the convolution matrix $A_{\mathbb{F}_3^n}$.

\begin{theorem}
Let $A_{\mathbb{F}_3^n}$  be the $3$-dimensional convolution matrix of $\mathbb{F}_3^n$. Then for all nonnegative integers $k_0, k_1, k_2$ with  $k_0 + k_1 + k_2 \leq n $, the value $3^{k_0}  ( i\sqrt{3})^{k_1} (- i\sqrt{3})^{k_2} $ is an eigenvalue of $A_{\mathbb{F}_3^n}$.
\end{theorem}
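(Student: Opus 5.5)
The plan is to repeat the argument of Theorem~\ref{F2nvalues}. The convolution matrix $A_{\mathbb{F}_3^n}$ is the $n$-fold Kronecker product of $A_{\mathbb{F}_3}$, so by Proposition~\ref{eigenKronprod} it suffices to show that the $3$-dimensional matrix $A_{\mathbb{F}_3}$ of order $3$ has the four eigenvalues $1$, $3$, $i\sqrt{3}$ and $-i\sqrt{3}$. Applying Proposition~\ref{eigenKronprod} repeatedly over the $n$ factors, we choose the eigenvalue $3$ in $k_0$ factors, $i\sqrt3$ in $k_1$ factors, $-i\sqrt3$ in $k_2$ factors, and $1$ in the remaining $n-k_0-k_1-k_2$ factors. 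This gives the eigenvalue $3^{k_0}(i\sqrt3)^{k_1}(-i\sqrt3)^{k_2}$ of $A_{\mathbb{F}_3^n}$, as claimed.

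To avoid computing the resultant of a $3\times3\times3$ system, I will exhibit explicit eigenvectors. Write $f=(a,b,c)$ for the values of $f$ at $0,1,2\in\mathbb{F}_3$. For each $x$, the pairs $(y,z)$ with $y+z=x$ are $(0,0),(1,2),(2,1)$ for $x=0$, then $(0,1),(1,0),(2,2)$ for $x=1$, and $(0,2),(2,0),(1,1)$ for $x=2$. Hence the eigen-equation $A\circ f=\theta f^2$ becomes the system
$$a^2+2bc=\theta a^2,\qquad 2ab+c^2=\theta b^2,\qquad 2ac+b^2=\theta c^2 .$$
The explicit solutions are as follows.
\begin{itemize}
\item $f=\chi_{0}=(1,0,0)$ satisfies the system with $\theta=1$.
\item $f=\mathbf{1}=(1,1,1)$ satisfies it with $\theta=3$.
\item Try $a=1$, $b=c$. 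The first equation gives $\theta=1+2b^2$, and the second becomes $2b+b^2=(1+2b^2)b^2$, that is, $b^3=1$. The third equation coincides with the second when $b=c$.
\item Taking $b=\omega=e^{2\pi i/3}$ gives $\theta=1+2\omega^2=-i\sqrt3$.
\item Taking $b=\omega^2$ gives $\theta=1+2\omega=i\sqrt3$.
\end{itemize}
All these vectors are nonzero, so $1,3,\pm i\sqrt3$ are eigenvalues of $A_{\mathbb{F}_3}$.

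The main obstacle is only to locate eigenvectors for the non-real eigenvalues. Guessing the symmetric ansatz $b=c$, which reduces the problem to $b^3=1$, settles it. Alternatively, one could compute the characteristic polynomial via the resultant, but that is heavier and is not needed, since Proposition~\ref{eigenKronprod} only requires the existence of eigenvalues. One should also double-check the Kronecker indexing convention: the identification of $\mathbb{F}_3^n$ with $\{1,\dots,3^n\}$ must be compatible with $A_{\mathbb{F}_3^n}=A_{\mathbb{F}_3}^{\otimes n}$, which is stated in the paper before Proposition~\ref{eigenKronprod}.
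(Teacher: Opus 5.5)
Your proof is correct. It establishes the key fact, that $1$, $3$ and $\pm i\sqrt3$ are eigenvalues of $A_{\mathbb{F}_3}$, by a different route from the paper.

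\emph{The paper's route.} It exhibits no eigenvectors. It writes $A_{\mathbb{F}_3}=2\mathcal{E}+\mathbb{I}$, where $\mathcal{E}$ is the adjacency matrix of the one-edge $3$-uniform hypergraph. It takes Cooper and Dutle's characteristic polynomial $\theta^3(\theta^3-1)^3$ of $\mathcal{E}$, rescales, and shifts $\theta\mapsto\theta-1$ to get $(\theta-1)^3(\theta-3)^3(\theta^2+3)^3$. It then applies the Kronecker-product proposition exactly as you do. This leans on resultant theory and an external computation, and in principle gives the full spectrum of the factor.

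\emph{Your route.} You use only the definition of an eigenvalue. Since Kronecker products of eigenvectors are eigenvectors of the Kronecker product, your argument also yields explicit eigenvectors of $A_{\mathbb{F}_3^n}$: tensor products of $\chi_0$, $\mathbf{1}$, $(1,\omega,\omega)$ and $(1,\omega^2,\omega^2)$.

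\emph{Why your route is the more reliable one here.} The tensor $2\mathcal{E}+\mathbb{I}$, like the matrix displayed in the paper, has ones where $x+y+z=0$. The convolution matrix as defined has ones where $x=y+z$. For instance $a_{1,1,1}=0$, since $1+1\neq 1$ in $\mathbb{F}_3$, whereas $2\mathcal{E}+\mathbb{I}$ has a $1$ there. The two tensors differ by negating the first index alone, which is not a similarity transformation, so their spectra need not agree. Indeed, your system also has solutions with $f(1)\neq f(2)$. These give the eigenvalues $\pm\sqrt2\pm i$ (the roots of $\theta^4-2\theta^2+9$), which are not roots of the paper's polynomial.

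The theorem itself is unaffected. The eigenvectors for $1,3,\pm i\sqrt3$ can be chosen with $f(1)=f(2)$, and such vectors satisfy both systems. That is exactly what your computation checks.

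A minor point: with $a=1$ and $b=c$, the second equation reduces to $b=b^4$, not $b^3=1$, so $b=0$ is also allowed. It just gives back $\chi_0$ and does not affect your argument.
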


\begin{proof}
As in the previous  proof,  $A_{\mathbb{F}_3^n}$ is the $n$-times Kronecker product of the matrix $A_{\mathbb{F}_3}$.  The  convolution matrix $A_{\mathbb{F}_3}$ of  $\mathbb{F}_3$  has the form
$$ A_{\mathbb{F}_3}= \left( \begin{array}{ccc|ccc|ccc} 
1 & 0 & 0 & 0 & 0 & 1 & 0 & 1 & 0  \\ 
0 & 0 & 1 & 0 & 1 & 0 & 1 & 0 & 0  \\ 
0 & 1 & 0 & 1 & 0 & 0 & 0 & 0 & 1  \\ 
\end{array} \right).$$

Let $\mathcal{E}$ denote the adjacency matrix of the $3$-uniform hypergraph $H$ on $3$ vertices with a single hyperedge:
$$ \mathcal{E} = \left( \begin{array}{ccc|ccc|ccc} 
0 & 0 & 0 & 0 & 0 & \nicefrac{1}{2} & 0 & \nicefrac{1}{2} & 0  \\ 
0 & 0 & \nicefrac{1}{2} & 0 & 0 & 0 & \nicefrac{1}{2} & 0 & 0  \\ 
0 & \nicefrac{1}{2} & 0 & \nicefrac{1}{2} & 0 & 0 & 0 & 0 & 0  \\ 
\end{array} \right).$$
Note that  $A_{\mathbb{F}_3}= 2 \mathcal{E} + \mathbb{I}$. Since  the characteristic polynomial $\varphi_{A_{\mathbb{F}_3}}  (\theta)$ of  $A_{\mathbb{F}_3}$  is the resultant  of the system $ (A_{\mathbb{F}_3} - \theta \mathbb{I}) f = 0 $ (see~\cite{qi.eigentensor}), $\varphi_{A_{\mathbb{F}_3}}$  can be obtained from the characteristic polynomial $\varphi_{2\mathcal{E}} (\theta)$ of $2 \mathcal{E}$ by replacing $\theta$ with $\theta - 1$.

The characteristic polynomial $\varphi_{\mathcal{E}} (\theta)$ for the adjacency matrix $\mathcal{E}$ was found in~\cite{CoopDut.hyperspec} by Cooper and Dutle:
$$\varphi_{\mathcal{E}} (\theta) = \theta^3 (\theta^3 - 1)^3 =  \theta^3 (\theta - 1)^3 (\theta^2 + \theta + 1)^3.  $$
The eigenvalues of  the  matrix $2 \mathcal{E}$ are those of $\mathcal{E}$ multiplied by $2$. Consequently, its characteristic polynomial is
$$\varphi_{2\mathcal{E}} (\theta)  =  \theta^3 (\theta - 2)^3 (\theta^2 + 2 \theta + 4)^3.  $$
 Replacing  $\theta$ with $\theta - 1$, we obtain
 $$\varphi_{A_{\mathbb{F}_3}} (\theta)  =  (\theta - 1)^3 (\theta - 3)^3 (\theta^2 + 3)^3 . $$
 Thus $1$, $3$, and  $\pm i \sqrt{3}$ are  the eigenvalues of $A_{\mathbb{F}_3}$. By Proposition~\ref{eigenKronprod}, the convolution matrix $A_{\mathbb{F}_3^n}$ has eigenvalues $3^{k_0}  ( i\sqrt{3})^{k_1} (- i\sqrt{3})^{k_2} $   for all nonnegative integers $k_0, k_1, k_2$ such that  $k_0 + k_1 + k_2 \leq n $.
 
\end{proof}

\section{Relations between convolution matrices, subspace hypergraphs, and perfect colorings} \label{relationsec}

We begin this section by showing that the convolution of functions coincides with the combined product of these functions and the convolution matrix.

\begin{lemma} \label{convmatr}
Let $G$ be a finite abelian group, let $A^{(t)}_G$ be the $t$-times convolution matrix of the group $G$, and let $f_1, \ldots, f_{t+1} : G \rightarrow \mathbb{C}$. Then 
$$A^{(t)}_G  \cdot (f_1, \ldots, f_{t+1}) = f_1 *   \cdots   * f_{t+1}.$$
\end{lemma}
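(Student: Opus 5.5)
We argue by direct computation from the definitions, with induction on $t$ as the organizing device. First we make precise what the $(t+1)$-fold convolution means. Convolution on a finite abelian group is associative and commutative, so $f_1 * \cdots * f_{t+1}$ is well defined. Unwinding the definition $(f*g)(x)=\sum_y f(y)g(x-y)$ gives the closed form
$$ (f_1 * \cdots * f_{t+1})(x) = \sum_{\substack{x_2,\ldots,x_{t+2}\in G\\ x_2+\cdots+x_{t+2}=x}} f_1(x_2)\cdots f_{t+1}(x_{t+2}). $$
We prove this closed form by induction on $t$. For $t=1$ it is exactly the definition, after substituting $x_3 = x - x_2$. For the inductive step we write $f_1*\cdots*f_{t+2} = (f_1*\cdots*f_{t+1})*f_{t+2}$. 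We expand the outer convolution as $\sum_{y} (f_1*\cdots*f_{t+1})(y)\, f_{t+2}(x-y)$ and insert the inductive hypothesis for the inner factor. Setting $x_{t+3}=x-y$ then collapses the double sum into a single sum over tuples with $x_2+\cdots+x_{t+3}=x$. Because $x_{t+3}$ is determined by $y$ and vice versa, this reindexing is a bijection.

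Next we compute the combined product. By definition,
$$ \bigl(A^{(t)}_G\cdot(f_1,\ldots,f_{t+1})\bigr)(x) = \sum_{x_2,\ldots,x_{t+2}\in G} a_{x,x_2,\ldots,x_{t+2}}\, f_1(x_2)\cdots f_{t+1}(x_{t+2}). $$
The matrix $A^{(t)}_G$ is a $(0,1)$-matrix whose entry $a_{x,x_2,\ldots,x_{t+2}}$ equals $1$ exactly when $x=x_2+\cdots+x_{t+2}$. So the sum restricts to precisely those tuples, and it coincides with the closed form above. This proves the lemma.

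There is no real obstacle here. The only point needing care is the index bookkeeping in the inductive step, namely checking that the reindexing $y \leftrightarrow x_{t+3}=x-y$ is a bijection between the two summation ranges. Alternatively, one can avoid induction altogether: define the multi-convolution by the closed form, and note that associativity makes it agree with any bracketing of iterated binary convolutions.
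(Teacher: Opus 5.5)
Your proof is correct and follows the paper's argument. Both write the $(t+1)$-fold convolution as a sum over tuples whose entries add up to $x$, and both observe that the $(0,1)$-entries of $A^{(t)}_G$ select exactly those tuples in the combined product. The only difference is that you prove the closed form for the iterated convolution by induction on $t$, whereas the paper reads it off directly from the definition of convolution.
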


\begin{proof}
From the definition of convolution, for each $x \in G$ we have
$$( f_1 *   \cdots   * f_{t+1}) (x) = \sum\limits_{x_1, \ldots, x_{t} \in G} f_1(x_1)  \cdots f_{t}(x_{t}) \cdot f_{t+1} (x - x_1 - \ldots - x_{t}).$$

Using the definition of the combinded product and  the fact that the entry $a_{x_0, x_1, \ldots x_{t+1}} $ of  the  convolution matrix $A^{(t)}_G$  is equal to $1$ if and only if $x_0 = x_1+ \cdots +x_{t+1}$ in $G$, we obtain
\begin{gather*}
A^{(t)}_G \cdot (f_1,   \ldots, f_{t+1})  (x)=\sum\limits_{x_1, \ldots, x_{t+1} \in G} a_{x,x_1, \ldots, x_{t+1}} f_1(x_1) \cdots  f_{t+1}( x_{t+1})= \\
\sum\limits_{x_1, \ldots, x_{t} \in G} f_1(x_1) \cdots f_{t} (x_t)  \cdot f_{t+1}( x - x_1 - \ldots - x_{t} )  =( f_1 *   \ldots   * f_{t+1}) (x) .
\end{gather*}
\end{proof}

Since the combined product $A \cdot (f, \ldots, f)$ of a multidimensional matrix $A$ and a functions $f$ coincides with the product $A \circ f$, we obtain the following property.

\begin{sled} \label{convaseigen}
Let $G$ be a finite abelian group, let $f: G \rightarrow \mathbb{C}$, and let $A^{(t)}_G$ be the $t$-times convolution matrix of the group $G$. Then $A^{(t)}_G \circ f = f *   \cdots   * f$,  that is, the convolution of $t+1$ copies of the function $f$.
\end{sled}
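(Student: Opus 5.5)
This follows immediately from Lemma~\ref{convmatr}, so the plan is a one-line specialization plus a check of the identification $A \cdot (f,\ldots,f) = A \circ f$.

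First, apply Lemma~\ref{convmatr} with $f_1 = \cdots = f_{t+1} = f$. This gives
$$A^{(t)}_G \cdot (f, \ldots, f) = f * \cdots * f \quad (t+1 \text{ copies}).$$

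Second, identify the left-hand side with $A^{(t)}_G \circ f$. In the definition of the product $A \circ B$, take $B = f$, a $1$-dimensional matrix ($t=1$ in that definition). Then the result is $((d-1)\cdot 0+1)$-dimensional, i.e.\ a vector, with entries
$$\sum_{x_2,\ldots,x_d} a_{x,x_2,\ldots,x_d} f(x_2)\cdots f(x_d).$$
This is exactly the combined product with $d-1$ copies of $f$, as the paper already remarks after defining the product. Here $d = t+2$, so $d-1 = t+1$ copies, which matches the number of functions in the lemma.

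Combining the two steps gives the claim. There is no real obstacle; the only point to watch is that the dimension count for $A^{(t)}_G$ yields exactly $t+1$ copies of $f$. Associativity of convolution on an abelian group ensures the iterated convolution is unambiguous, and the lemma's proof already expands it in this fully unbracketed form.
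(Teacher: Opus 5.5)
Your proposal is correct and follows the paper's own argument: specialize Lemma~\ref{convmatr} to $f_1 = \cdots = f_{t+1} = f$ and identify the combined product $A^{(t)}_G \cdot (f,\ldots,f)$ with $A^{(t)}_G \circ f$. Your dimension check ($d = t+2$, hence $t+1$ copies of $f$) is accurate.
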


We also show that  the  product of convolution matrices is a convolution matrix of higher dimension.

\begin{utv}
If $A^{(t)}_G$  and $A^{(s)}_G$  are  $t$-times and $s$-times convolution matrices of a finite abelian group $G$, respectively, then $ A^{(t)}_G  \circ A^{(s)}_G = A^{(ts + t + s)}_G$.  In particular, $A^{(1)}_G \circ A^{(1)}_G = A^{(3)}_G$. 
\end{utv}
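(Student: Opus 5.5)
We prove the identity by computing the entries of $C = A^{(t)}_G \circ A^{(s)}_G$ directly from the definition of the product. Here $A^{(t)}_G$ has dimension $d = t+2$ and $A^{(s)}_G$ has dimension $s+2$. So $C$ has dimension $(d-1)(s+2-1)+1 = (t+1)(s+1)+1 = ts+t+s+2$, which is exactly the dimension of $A^{(ts+t+s)}_G$. The indices of $C$ are $(x, \textbf{y}^2, \ldots, \textbf{y}^{t+2})$ with each $\textbf{y}^j = (y^j_1, \ldots, y^j_{s+1}) \in G^{s+1}$, and
$$c_{x,\textbf{y}^2,\ldots,\textbf{y}^{t+2}} = \sum_{x_2,\ldots,x_{t+2}\in G} a_{x,x_2,\ldots,x_{t+2}}\, b_{x_2,\textbf{y}^2}\cdots b_{x_{t+2},\textbf{y}^{t+2}}.$$

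The key step is to observe that every factor is a $(0,1)$-indicator of a linear relation. The factor $b_{x_j,\textbf{y}^j}$ is nonzero (and equal to $1$) if and only if $x_j = y^j_1+\cdots+y^j_{s+1}$. Hence, for fixed $\textbf{y}^j$, at most one choice of $x_j$ contributes, namely $x_j$ equal to that sum. Substituting these forced values, the sum collapses to a single term, and
$$c_{x,\textbf{y}^2,\ldots,\textbf{y}^{t+2}} = a_{x,\,\sum_i y^2_i,\,\ldots,\,\sum_i y^{t+2}_i},$$
which equals $1$ if and only if $x = \sum_{j=2}^{t+2}\sum_{i=1}^{s+1} y^j_i$, and $0$ otherwise. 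This is precisely the defining condition of the entries of $A^{(ts+t+s)}_G$: the first index equals the sum of the remaining $(t+1)(s+1)$ indices. The special case $t=s=1$ gives dimension $4$, so $A^{(1)}_G\circ A^{(1)}_G = A^{(3)}_G$.

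As a sanity check, or as an alternative route, one could use Lemma~\ref{convmatr} together with associativity of convolution on product-type vectors. The direct entrywise computation is cleaner, though, and I will present that one. The only delicate point is bookkeeping: the multi-index ordering of $C$ must match the index ordering of $A^{(ts+t+s)}_G$. Since the defining condition is symmetric in all non-first indices, any consistent flattening of $(\textbf{y}^2,\ldots,\textbf{y}^{t+2})$ into a single tuple of length $(t+1)(s+1)$ works. I do not expect a genuine obstacle.
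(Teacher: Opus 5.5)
Your proposal is correct and follows essentially the same route as the paper: you compute the entries of $A^{(t)}_G \circ A^{(s)}_G$ directly, each factor $a^{(s)}_{x_j,\textbf{y}^j}$ forces $x_j = y^j_1+\cdots+y^j_{s+1}$, and so the entry equals $1$ exactly when $x$ is the sum of all $(t+1)(s+1)$ remaining indices. Your explicit observation that the sum collapses to a single term spells out the step the paper passes over with ``consequently'', and your dimension check matches the paper's.
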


\begin{proof}
By the definition of the product of multidimensional matrices, $ A^{(t)}_G  \circ A^{(s)}_G$ is a $(ts + t + s +2)$-dimensional matrix $C$ with entries 
$$ c_{x, \textbf{y}^2, \ldots, \textbf{y}^{t+2}} = \sum\limits_{x_2= 1}^n \cdots \sum\limits_{x_{t+2}= 1}^n  a^{(t)}_{x, x_2, \ldots, x_{t+2}} \cdot a^{(s)}_{x_2, \textbf{y}^2}\cdots a^{(s)}_{x_{t+2}, \textbf{y}^{t+2}},$$
 where indices $\textbf{y}^{2}, \ldots, \textbf{y}^{t+2} \in G^{s+1}$ have length $s+1$ and $x \in G$.  By the definition of the convolution matrices, $a^{(t)}_{x, x_2, \ldots, x_{t+2}} = 1$ if $x = x_2 + \cdots + x_{t+2}$ and $a^{(t)}_{x, x_2, \ldots, x_{t+2}} = 0$ otherwise, and  for all $i \in \{ 2, \ldots, t+2 \}$ we have that  $a^{(s)}_{x_i, \textbf{y}^i} = 1$ if $x_i = y^i_1 + \cdots + y^i_{s+1}$ and   $a^{(s)}_{x_i, \textbf{y}^i} = 0$  otherwise. Consequently, $c_{x, \textbf{y}^2, \ldots, \textbf{y}^{t+2}} = 1$ if $x=y^2_1 + \cdots + y^2_{s+1}  + \cdots +  y^{t+2}_1 + \cdots + y^{t+2}_{s+1} $ and $c_{x, \textbf{y}^2, \ldots, \textbf{y}^{t+2}} = 0$ otherwise, that is equivalent to that $ A^{(t)}_G  \circ A^{(s)}_G = A^{(ts + t + s)}_G$.
\end{proof}

Next, we express the combined product of functions and  convolution matrices  with the help of their combined product by the adjacency matrices of the  subspace hypergraphs $\mathcal{H}_n$ and $\mathcal{D}_n$.

\begin{utv} \label{convhyper3comb}
Let $A$ be the convolution matrix of $\mathbb{F}_2^n$, let $M_{H}$ be the adjacency matrix of the subspace hypergraph $\mathcal{H}_n$, $f , g : \mathbb{F}_2^n \rightarrow \mathbb{C}$, and let $f'$ and $g'$ be the restrictions of $f$ and $g$ to $\mathbb{F}_2^n \setminus \{ \overline{0}\}$.  Then for every $x \neq \overline{0}$ we have
$$(A \cdot(f,g))  (x) = 2 (M_{H} \cdot  (f' ,g')) (x) +  f(\overline{0}) g(x)  +f(x)   g(\overline{0}) $$
and
$$(A \cdot(f,g))   (\overline{0}) = \sum\limits_{x \in \mathbb{F}_2^n} f(x) g(x).$$
\end{utv}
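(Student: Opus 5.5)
We prove both identities by expanding the combined product entrywise and sorting the terms according to whether $\overline{0}$ occurs among the indices. By Lemma~\ref{convmatr} (or directly from the definition), for every $x \in \mathbb{F}_2^n$
$$(A\cdot(f,g))(x) = \sum_{y,z \in \mathbb{F}_2^n,\ y+z = x} f(y)g(z) = \sum_{y \in \mathbb{F}_2^n} f(y) g(x+y),$$
since $-y = y$ in $\mathbb{F}_2^n$.

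For $x = \overline{0}$ the condition $y+z=\overline{0}$ means $z=y$. The sum therefore collapses to $\sum_{y} f(y)g(y)$, which is the second identity.

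For $x \neq \overline{0}$ we split the sum over pairs $(y,z)$ with $y+z=x$ into three disjoint classes.
\begin{enumerate}
\item The pair $y=\overline{0}$, $z=x$, which contributes $f(\overline{0})g(x)$.
\item The pair $z=\overline{0}$, $y=x$, which contributes $f(x)g(\overline{0})$.
\item The pairs with $y,z$ both nonzero.
\end{enumerate}
The first two classes are distinct because $x\neq\overline{0}$, and both $y$ and $z$ cannot be $\overline{0}$ at once since their sum is $x \neq \overline{0}$. In the third class we also have $y \neq z$ (otherwise $x=\overline{0}$) and $y,z \neq x$. Hence $\{x,y,z\}$ is a $3$-element set of nonzero vectors with $x+y+z=\overline{0}$, that is, a hyperedge of $\mathcal{H}_n$.

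It remains to compare the third class with the combined product with $M_H$. By definition,
$$(M_H\cdot(f',g'))(x) = \sum_{y,z \neq \overline{0}} m_{x,y,z} f(y) g(z),$$
where $m_{x,y,z} = \tfrac{1}{2}$ exactly when $\{x,y,z\}$ is a hyperedge (so $x,y,z$ are distinct and sum to $\overline{0}$), and $m_{x,y,z}=0$ otherwise. The ordered pairs $(y,z)$ of nonzero vectors with $y+z = x$ are exactly the pairs for which $\{x,y,z\}$ is a hyperedge, each counted once as an ordered pair. So the third class equals $2(M_H\cdot(f',g'))(x)$, and the factor $2$ cancels the normalization $\tfrac{1}{(d-1)!}=\tfrac12$ in the adjacency matrix. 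Adding the three classes gives the first identity.

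The main point to handle carefully is this bookkeeping. The adjacency matrix counts ordered index tuples with weight $\tfrac12$, whereas the convolution counts each ordered pair $(y,z)$ with weight $1$. One must check that no degenerate tuples (repeated vertices, or the vertex $\overline{0}$) slip into the third class; this follows from the case split above.
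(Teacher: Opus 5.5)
Your proposal is correct and follows the paper's proof. Like the paper, you expand $(A\cdot(f,g))(x)=\sum_y f(y)g(x+y)$, split off the two terms with $y=\overline{0}$ or $y=x$, identify the remaining sum over ordered pairs with $2(M_H\cdot(f',g'))(x)$, and let the case $x=\overline{0}$ collapse to the diagonal sum. Your explicit check that the remaining pairs are exactly the ordered hyperedge tuples, each carrying weight $\tfrac12$ in $M_H$, simply spells out what the paper leaves implicit.
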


\begin{proof}
By the definition of the  convolution matrix,  $a_{x, y, z} = 1$ if   $x = y +z$, and   $a_{x, y, z} = 0$ otherwise.  Note that in the group $\mathbb{F}_2^n$ it holds  $x = - x$ for all $x$, and  the equality $x + y = \overline{0}$ is equivalent to $x = y$.  Then for all $x \neq \overline{0}$ we have

\begin{gather*} 
(A \cdot(f,g))  (x)  = \sum\limits_{y, z \in \mathbb{F}_2^n} a_{x, y, z} f(y) g(z)  = \sum\limits_{y \in \mathbb{F}_2^n} f(y) g(x + y) =  \\
 \sum\limits_{y \neq \overline{0},  x  }  f(y) g(x + y) +  f(\overline{0}) g(x)  + f(x)  g(\overline{0}) =  
 2 (M_{H} \cdot  (f' ,g')) (x) +  f(\overline{0}) g(x)  +f(x)   g(\overline{0}). 
\end{gather*}
If $x = \overline{0}$, then
$$
(A \cdot(f,g))   (\overline{0}) =  \sum\limits_{x, y \in \mathbb{F}_2^n} a_{\overline{0}, x, y} f(x) g(y)  = \sum\limits_{x \in \mathbb{F}_2^n} f(x) g(x).
$$
\end{proof}

\begin{sled} \label{3conhypeigen}
Let $A$ be the convolution matrix of $\mathbb{F}_2^n$, $M_{H}$ be the adjacency matrix of the subspace hypergraph $\mathcal{H}_n$,  $f: \mathbb{F}_2^n \rightarrow \mathbb{C}$, and let $f'$ be the restriction of $f$ to $\mathbb{F}_2^n \setminus \{ \overline{0}\}$.
\begin{itemize}
\item If $f$ is an eigenvector of $A$ corresponding to an eigenvalue $\theta$ and $f(\overline{0}) = 0$, then $f'$ is an eigenvector for $M_{H}$ with an eigenvalue  $ \theta /2 $.
\item  If $f$ is an eigenvector of $A$ corresponding to an eigenvalue $\theta$ and  $f : \mathbb{F}_2^n \rightarrow \{0, 1 \}$, then $f'$ is  an eigenvector for $M_{H}$ with an eigenvalue $\theta /2  -  f(\overline{0})$.
\end{itemize} 
\end{sled}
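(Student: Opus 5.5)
Both parts follow from Proposition~\ref{convhyper3comb} applied with $g=f$. That proposition gives, for every $x \neq \overline{0}$,
$$(A\circ f)(x) = 2 (M_H \circ f')(x) + 2 f(\overline{0}) f(x).$$
Here we use that the combined product with two copies of $f$ is the product $A \circ f$. The same holds for $M_H$: the adjacency matrix is symmetric in its last two indices, so $M_H\cdot(f',f') = M_H \circ f'$. The eigenvector equation $A\circ f = \theta f^2$ then reads, at every nonzero $x$,
$$2(M_H\circ f')(x) = \theta f(x)^2 - 2 f(\overline{0}) f(x).$$

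For the first item, we set $f(\overline{0})=0$. The correction term vanishes and we get $M_H\circ f' = \frac{\theta}{2} (f')^2$ on $\mathbb{F}_2^n\setminus\{\overline{0}\}$. It remains to check that $f'$ is nonzero. This holds because $f\neq 0$ and $f(\overline{0})=0$, so $f$ has a nonzero value at some nonzero point. Hence $f'$ is an eigenvector of $M_H$ with eigenvalue $\theta/2$.

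For the second item, $f$ takes values in $\{0,1\}$, so $f(x)^2=f(x)$ for every $x$. The correction term becomes $2f(\overline{0})f(x) = 2 f(\overline{0}) f(x)^2$. Therefore
$$M_H\circ f' = \Bigl(\frac{\theta}{2} - f(\overline{0})\Bigr)(f')^2 .$$

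The only delicate point is again that $f' \neq 0$. If $f(\overline{0})=0$, this follows as in the first item. If $f(\overline{0})=1$, suppose $f=\chi_{\overline{0}}$. Evaluating the eigenvector equation at $\overline{0}$ via the second formula of Proposition~\ref{convhyper3comb} gives $\sum_x f(x)^2 = \theta f(\overline{0})^2$, so $\theta = 1$. In this degenerate case $f'$ vanishes identically and is not an eigenvector. The statement should therefore implicitly exclude $f=\chi_{\overline{0}}$, equivalently assume that $f'$ is nonzero. I would add this caveat and otherwise conclude directly from the displayed identity.
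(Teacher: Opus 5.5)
Your proposal is correct and follows essentially the same route as the paper: set $g=f$ in Proposition~\ref{convhyper3comb}, rewrite the eigenvalue equation at nonzero points as $M_H\circ f' = \frac{\theta}{2}(f')^2 - f(\overline{0})f'$, and use $f^2=f$ for $\{0,1\}$-valued $f$. Your extra check that $f'\neq 0$ is something the paper's proof omits, and your caveat is right: $f=\chi_{\overline{0}}$ is an eigenvector of $A$ with $\theta=1$ whose restriction $f'$ vanishes, so this case has to be excluded from the second item.
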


\begin{proof}
If $f$ is an eigenfunction of $A$ corresponding to an eigenvalue $\theta$, then $A \circ f = \theta (\mathbb{I} \circ f)$, where $\mathbb{I}$ is the $3$-dimensional identity matrix. 
From Proposition~\ref{convhyper3comb}, for every $x \neq \overline{0}$  we have 
$$A \circ f  (x) = 2 (M_{H}  \circ f') (x) + 2  f(\overline{0}) f(x).$$
Hence, 
$$ M_H \circ f' =   \theta/2 ( \mathbb{I} \circ  f') - f(\overline{0}) f' .$$
If $f(\overline{0}) = 0$, then $\theta / 2$ is the eigenvalue of $M_H$ for the eigenfunction function $f'$.

Since for each  $x \neq \overline{0}$  we have $(\mathbb{I} \circ f') (x) =(f'(x))^2$, this value coincides with  $ f'(x)$ if and only if $f'(x) $ is equal to $0$ or $1$. Consequently, if $f$ takes only values $0$ and $1$, then $f'$ is  the eigenfunction of $M_{H}$ corresponding to the eigenvalue $\theta/2  -  f(\overline{0})$.
\end{proof}

We obtain a similar result for the $2$-times convolution matrix and the adjacency matrix of the subspace hypergraph $\mathcal{D}_n$.

\begin{utv} \label{2convhyper4}
Let $A^{(2)}$ be the  $2$-times convolution matrix of $\mathbb{F}_2^n$,  $M_{D}$  be the adjacency matrix of the subspace hypergraph $\mathcal{D}_n$, and let $f: \mathbb{F}_2^n \rightarrow \mathbb{C}$.  Then for all $x \in \mathbb{F}_2^n$ we have
$$(A^{(2)} \circ f) (x) = 6 (M_{D} \circ f) (x)  + 3 (\sum\limits_{y  \neq x} f^2(y)) f(x) +  (\mathbb{I} \circ f) (x). $$
\end{utv}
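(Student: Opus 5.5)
We will prove the identity by expanding $(A^{(2)} \circ f)(x)$ directly from the definitions and then sorting the terms according to which coordinates coincide. By Corollary~\ref{convaseigen} (or directly from the definition of the product),
$$(A^{(2)}\circ f)(x)=\sum_{\substack{y,z,w\in\mathbb{F}_2^n\\ y+z+w=x}} f(y)f(z)f(w),$$
and in $\mathbb{F}_2^n$ the condition $y+z+w=x$ is equivalent to $x+y+z+w=\overline{0}$. We split the set $T_x$ of ordered triples $(y,z,w)$ satisfying this condition into two parts: those for which $x,y,z,w$ are pairwise distinct, and those with at least one coincidence among the four elements.

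For the first part, each such ordered triple gives a hyperedge $\{x,y,z,w\}$ of $\mathcal{D}_n$. Conversely, every hyperedge containing $x$ arises from exactly $3!=6$ ordered triples. The adjacency matrix $M_D$ has entries $\frac{1}{3!}$ on ordered hyperedges. Hence $(M_D\circ f)(x)=\sum_{e\ni x}\prod_{v\in e\setminus\{x\}}f(v)$, and the distinct part of the sum equals exactly $6(M_D\circ f)(x)$.

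For the second part, we use the key observation that if two of the four summands in $x+y+z+w=\overline{0}$ coincide, then their sum is $\overline{0}$, so the remaining two also coincide. Thus a coincidence occurs precisely when one of three pairings holds:
\begin{itemize}
\item $P_1$: $\{y=z,\ w=x\}$;
\item $P_2$: $\{y=w,\ z=x\}$;
\item $P_3$: $\{z=w,\ y=x\}$.
\end{itemize}
Each $P_i$ alone contributes $\sum_{y} f(y)^2 f(x)$. Any two of them together force $y=z=w=x$, and so does the intersection of all three. Inclusion--exclusion then gives the total contribution
$$3\sum_{y}f^2(y)f(x)-3f(x)^3+f(x)^3 = 3\Big(\sum_{y\neq x}f^2(y)\Big)f(x)+f(x)^3.$$
Since $f(x)^3=(\mathbb{I}\circ f)(x)$ for the $4$-dimensional identity matrix, adding the two parts yields the stated formula.

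The main point requiring care is the bookkeeping in the degenerate case: checking that the three pairings exhaust all coincidences and that their pairwise and triple intersections are exactly the single all-equal triple. The normalization $\frac{1}{(d-1)!}$ in $M_D$ also has to be matched correctly with the factor $6$.
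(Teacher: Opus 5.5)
Your proof is correct and follows essentially the paper's argument. You split the ordered triples into pairwise-distinct ones, which give $6(M_D\circ f)(x)$, and degenerate ones, using that a coincidence of two summands forces the other two to coincide; the only cosmetic difference is that you count the degenerate part by inclusion--exclusion over the three pairings, whereas the paper directly splits it into the all-equal triple and three disjoint families with $y\neq x$.
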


\begin{proof}
By the definition of the $2$-times convolution matrix $A^{(2)}$,  its entry $a_{x, y, z, v} = 1$ if $x = y +z + v$ and  $a_{x, y, z, v} = 0$ otherwise.  Note that  if   $x,y,z,v   \in \mathbb{F}_2^n$   satisfy  $x = y +z + v$ and two of them  coincide (for example, $x = y$), then the remaining two elements  also  coincide ($z = v$).  

Given $x \in \mathbb{F}_2^n$, let $R_x$  denote the set of all ordered pairs $(y,z)$ such that $x, y, z$, and $x + y + z$ are pairwise distinct.   Then for every  $x \in \mathbb{F}_2^n$ we have

 \begin{gather*}
 (A^{(2)} \circ f) (x)  = \sum\limits_{y,z,v \in \mathbb{F}_2^n} a_{x,y,z,v} f(y) f(z) f(v) =  \\
  \sum\limits_{(y,z) \in R_x} f(y) f(z) f(x + y +z)  +  3 \sum\limits_{y \neq x } f(y)^2  f(x)  + f^3 (x) =\\
    6 (M_{D} \circ f) (x)  + 3 (\sum\limits_{y  \neq x} f^2(y)) f(x) +  (\mathbb{I} \circ f) (x),
\end{gather*}
where $\mathbb{I}$ is the $4$-dimensional identity matrix.
\end{proof}

Similarly to Corollary~\ref{3conhypeigen}, the above statement can also be used to show that certain eigenfunctions of the $2$-times convolution matrix give rise to eigenfunctions of the subspace hypergraph and vice versa.

Next, we use the obtained results to show that  perfect colorings  of the subspace hypergraph $\mathcal{H}_n$ can be described  in terms of the convolution of the indicator functions of colors.

\begin{lemma} \label{colorasconv}
Let  $M_{H}$ be the adjacency matrix of the subspace hypergraph $\mathcal{H}_n$. A $k$-coloring   with  color matrix $F = (F_1, \ldots, F_k)$  is a  perfect coloring of $\mathcal{H}_n$ with the parameter matrix $S$ if and only if for all $j,\ell \in \{ 1, \ldots, k\}$ it holds $  F_j * F_\ell  =  \sum\limits_{i = 1}^k  2  s_{i,j, \ell}  F_{i}   $. 
\end{lemma}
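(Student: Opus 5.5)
The plan is to reduce the statement to the matrix characterization of Theorem~\ref{adjparam} and then rewrite the hypergraph product as a convolution using Lemma~\ref{convmatr} and Proposition~\ref{convhyper3comb}. Throughout, each color indicator $F_i$ is a function on $\mathbb{F}_2^n\setminus\{\overline{0}\}$. I regard it as a function on $\mathbb{F}_2^n$ by setting $F_i(\overline{0})=0$, so that convolution makes sense. The identity $F_j * F_\ell = \sum_i 2 s_{i,j,\ell} F_i$ is then to be read on the vertex set $\mathbb{F}_2^n\setminus\{\overline{0}\}$ of $\mathcal{H}_n$. At $\overline{0}$ the left side equals $\sum_x F_j(x)F_\ell(x)=\delta_{j\ell}|P_j|$ by the second formula of Proposition~\ref{convhyper3comb}, while the right side vanishes, so the point $\overline{0}$ must be excluded. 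I will state this convention explicitly at the start of the proof.

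First I unpack $M_H\circ F = F\circ S$ entrywise. Here $F$ is the $n\times k$ color matrix, viewed as a $2$-dimensional matrix, so $M_H\circ F$ is a $3$-dimensional array indexed by $(x,j,\ell)$. By the definition of the product, its $(x,j,\ell)$ entry is $\sum_{y,z} m_{x,y,z}F_j(y)F_\ell(z) = (M_H\cdot(F_j,F_\ell))(x)$. The array $F\circ S$ has $(x,j,\ell)$ entry $\sum_i F_i(x)s_{i,j,\ell}$. Hence, by Theorem~\ref{adjparam}, $F$ is perfect with parameter matrix $S$ if and only if $M_H\cdot(F_j,F_\ell)=\sum_i s_{i,j,\ell}F_i$ holds on $\mathbb{F}_2^n\setminus\{\overline{0}\}$ for all $j,\ell$.

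Next I apply Lemma~\ref{convmatr} with $t=1$ to get $F_j*F_\ell = A\cdot(F_j,F_\ell)$. Proposition~\ref{convhyper3comb} with $f=F_j$, $g=F_\ell$ gives, for $x\neq\overline{0}$, $(A\cdot(F_j,F_\ell))(x) = 2(M_H\cdot(F_j,F_\ell))(x)$, because $F_j(\overline{0})=F_\ell(\overline{0})=0$ kills the two correction terms. Combining this with the previous paragraph yields both directions at once: the convolution identity with coefficients $2s_{i,j,\ell}$ is exactly the entrywise form of $M_H\circ F=F\circ S$.

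The only delicate point is the converse direction, where the $s_{i,j,\ell}$ are just numbers satisfying the convolution identities. There I must check that this really gives a parameter matrix in the sense of Theorem~\ref{adjparam}. This follows because that theorem's criterion is merely the existence of some $S$ with $M_H\circ F=F\circ S$. Moreover, $S$ is uniquely determined, since the supports of the $F_i$ partition the vertex set and each color class is nonempty by surjectivity. So the main obstacle is bookkeeping: the correct indexing of the $3$-dimensional product $M_H\circ F$, and the treatment of the vertex $\overline{0}$.
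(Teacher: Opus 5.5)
Your proof is correct and follows the same route as the paper: extend the $F_i$ by zero at $\overline{0}$, use Lemma~\ref{convmatr} and Proposition~\ref{convhyper3comb} to get $F_j*F_\ell=2\,M_H\cdot(F_j,F_\ell)$ on $\mathbb{F}_2^n\setminus\{\overline{0}\}$, and conclude with Theorem~\ref{adjparam}. The only difference is that you read $M_H\circ F=F\circ S$ entrywise and add a uniqueness argument for $S$, whereas the paper evaluates $M_H\cdot(F_j,F_\ell)(x)$ as hyperedge counts $v_{i,j,\ell}(x)$ and matches them with the entry formula for $S$, treating $j\neq\ell$ and $j=\ell$ separately; your explicit remark that the identity must be read off $\overline{0}$ (it fails there for $j=\ell$) makes precise a convention the paper leaves implicit.
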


\begin{proof}
Recall that the vertex set of the  subspace hypergraph $\mathcal{H}_n$ is $\mathbb{F}_2^n \setminus \{ \overline{0}\}$.  Without loss of generality, we extend the indicator functions of the colors $F_i$ from $\mathbb{F}_2^n \setminus \{ \overline{0}\}$ to $\mathbb{F}_2^n$ by setting $F_i (\overline{0}) = 0$ for all $i \in \{1, \ldots, k \}$. This modification does not change the value $(F_j * F_\ell)  (x)$ for all $x \neq \overline{0}$. 

Consider a vertex $x \neq \overline{0}$ of color $i$ in the coloring $F$.   From Lemma~\ref{convmatr} and Proposition~\ref{convhyper3comb},  for all $j, \ell \in \{ 1, \ldots, k\}$ (either or both of which may be equal to $i$) we have 
$$ ( F_j * F_\ell)  (x) = A \cdot (F_j, F_\ell)  (x)= 2 (M_{H} \cdot  (F_j ,F_\ell)) (x). $$

If  $j \neq\ell$, then by the definition of the adjacency matrix of $\mathcal{H}_n$ and $F_i(x) = 1$, we obtain
$$ (M_{H} \cdot  (F_j ,F_\ell)) (x)  = \sum\limits_{(y,  z):  (x,y,z)  \in E(\mathcal{H}_n)}  1/2 \cdot  F_{j} (y) F_{\ell} (z) =   v_{i, j, \ell} (x) / 2 =  v_{i, j, \ell} (x) / 2 \cdot F_i(x),$$
where $v_{i,j,\ell} (x)$ denotes the number of hyperedges of color range $\{ i,j,\ell \}$  incident to the vertex $x$, and $E(\mathcal{H}_n)$ is the hyperedge set of $\mathcal{H}_n$. By  Theorem~\ref{adjparam}, the coloring $F$ is perfect if and only if the number  $v_{i, j, \ell} (x)$  does not depend on the choice of $x$ and equals  $2 s_{i, j, \ell}$.

Similarly, if $j = \ell$, then
$$ (M_{H} \cdot  (F_j ,F_j)) (x)  = \sum\limits_{(y,  z):  (x,y,z)  \in E(\mathcal{H}_n)}  1/2 \cdot  F_{j} (y) F_{j} (z) =   v_{i, j, j}  (x) =  v_{i, j, j}  (x) F_i(x) ,$$
where $v_{i,j,j} (x)$ denotes the number of hyperedges of color range $\{ i,j,j \}$  incident to  $x$. Again, by  Theorem~\ref{adjparam}, the coloring $F$ is perfect if and only if the number  $v_{i, j, j} (x)$  does not depend on  $x$ and equals   $s_{i, j, j}$. 

Therefore, $F$ is a perfect coloring with the parameter matrix $S$ if and only if for all $j,\ell \in \{ 1, \ldots, k\}$ it holds $  F_j * F_\ell  =   \sum\limits_{i = 1}^k   2 s_{i,j, \ell}  F_{i}   $. 
\end{proof}

Note that for the $4$-uniform subspace hypergraph $\mathcal{D}_n$ one can also state a similar condition for a coloring $F$ to be perfect in terms of the double convolution of $F_j$, $F_\ell$, and $F_m$. However, in that case the coefficients in the linear combinations of $F_i$ do not admit such a simple expression.

It is well known~\cite[Corollary 1.9]{ShiSole.CRCinDRG} that every $2$-valued eigenfunction of a regular graph corresponds to  a perfect $2$-coloring.  By Theorem~\ref{adjparam}, to every perfect $2$-coloring of a uniform hypergraph $\mathcal{G}$ we can associate a $2$-valued eigenfunction of its adjacency matrix $M$. Indeed, $F$ is the color matrix of a perfect $2$-coloring with the parameter matrix $S$ if and only if  $M \circ F = F \circ S$. Then for every eigenfunction $g = (\alpha , \beta)$, with $\alpha \neq \beta$, of the  matrix $S$, the function $Fg$ is a $2$-valued eigenfunction of the matrix $M$, where the distinct values  of $Fg$ correspond to the two colors in the perfect coloring (see also~\cite[Theorem 3.10]{my.hyperperfcolor}).

Unlike the graph case, for hypergraphs  this statement cannot be naturally reversed. We prove this for the  family of totally regular hypergraphs, which includes subspace hypergraphs; the general case remains open.

Let $\mathcal{G}(X,E)$ be a $d$-uniform hypergraph. Given a set $S \subseteq X$ of vertices of $\mathcal{G}$, the \textit{degree} $deg(S)$ of  $S$ is the number of hyperedges $e \in E$ such that  $S \subseteq E$.  We say that a $d$-uniform hypergraph $\mathcal{G}$ is \textit{totally regular}  (or \textit{totally $(r_1, \ldots, r_{d-1})$-regular}) 
 if  there exist numbers $r_i \in \mathbb{N}$, $i = 1, \ldots, d-1$,  such that  for every subset $S$ of vertices with $|S| = i$  we have $deg(S) = r_i$. Totally regular hypergraphs appear in literature under different names. For example in~\cite{niki.anmethhyper}, $d$-unform totally regular hypergraphs are called  $(d-1)$-set regular.

Totally regular hypergraphs are in one-to-one correspondence with block designs.  A  $t$-$(n,k,\lambda)$-design is a collection of blocks of size $k$ over a ground set $X$ of size $n$ such that every $t$-element subset of $X$ is contained in exactly $\lambda$ blocks; $t$-$(n,k,1)$-designs are called Steiner systems $S(t,k,n)$.  The hyperedges of  a $d$-uniform totally regular hypergraph on $n$ vertices, with degree of $(d-1)$-element subsets  equal to $r_{d-1}$,  form the blocks of a $(d-1)$-$(n,d,r_{d-1})$-design.  In particular, from the divisibility conditions for the parameters of  $(d-1)$-$(n,d,r_{d-1})$-designs, it follows that all parameters $r_i$ of totally $(r_1, \ldots, r_{d-1})$-regular hypergraphs are propotional to $r_{d-1}$.  The set of  hyperedges of the subspace hypergraph $\mathcal{H}_n$ forms a linear Steiner triple system $S(2,3,2^{n}-1)$, and the set of  hyperedges of $\mathcal{D}_n$ forms a linear Steiner quadruple system $S(3,4,2^{n})$.  Consequently, the subspace hypergraphs $\mathcal{H}_n$ and $\mathcal{D}_n$ are totally regular.

We are now ready to prove that $2$-valued eigenfunctions  of  totally regular hypergraphs are their perfect $2$-colorings.

\begin{theorem} \label{eigencolor}
Let $d \geq 3$, and let $\mathcal{G} (X,E)$  be a $d$-uniform totally $(r_1, \ldots, r_{d-1})$-regular  hypergraph  on $n$ vertices  with the adjacency matrix $M$.  Assume that  $f$ is  an eigenvector of $M$ corresponding to an eigenvalue $\theta$ such that $f(x) \in \{ \alpha, \beta \}$ for all $x \in X$, where $\alpha \neq \beta$. Then $f$ is a perfect  $2$-coloring  of $\mathcal{G}$.
\end{theorem}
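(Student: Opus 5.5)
The plan is to turn the eigenvalue equation into a statement about hyperedge counts. Total regularity then forces every other relevant count to depend only on the color class.

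\emph{Setup.} Put $P=\{x\in X : f(x)=\alpha\}$, $Q=X\setminus P$ and $\gamma=\alpha-\beta\neq 0$, so that $f=\beta\textbf{1}+\gamma\chi_P$. If $P$ or $Q$ is empty, $f$ is constant and the statement is trivial, so assume both are nonempty. The entry of $M$ on each ordering of a hyperedge is $\frac{1}{(d-1)!}$, and each hyperedge $e\ni x$ gives $(d-1)!$ ordered tuples $(x,x_2,\ldots,x_d)$. Hence
$$(M\circ f)(x)=\sum_{e\ni x}\ \prod_{y\in e\setminus\{x\}} f(y).$$
For a vertex $x$ and $0\le s\le d-1$, let $m_s(x)$ be the number of hyperedges through $x$ having exactly $s$ of their other $d-1$ vertices in $P$. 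The counts $m_s(x)$ describe exactly the color ranges of the hyperedges incident to $x$. So, by the definition of a perfect coloring, it suffices to show that each $m_s(x)$ depends only on whether $x\in P$ or $x\in Q$.

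\emph{Expansion.} Expand each product as $\prod_{y\in e\setminus\{x\}}(\beta+\gamma\chi_P(y))=\sum_{T\subseteq e\setminus\{x\},\,T\subseteq P}\beta^{d-1-|T|}\gamma^{|T|}$. This gives
$$(M\circ f)(x)=\sum_{t=0}^{d-1}\beta^{d-1-t}\gamma^{t}N_t(x),\qquad N_t(x)=\#\{(e,T): x\in e,\ T\subseteq (e\setminus\{x\})\cap P,\ |T|=t\}.$$
For $t\le d-2$ the set $T\cup\{x\}$ has size $t+1\le d-1$, so total regularity gives
$$N_t(x)=r_{t+1}\binom{|P|-\chi_P(x)}{t}$$
(with $r_1$ the ordinary degree). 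This quantity depends only on the class of $x$.

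\emph{The top term, the main obstacle.} For $t=d-1$, the set $T\cup\{x\}$ has size $d$, so regularity gives no information: $N_{d-1}(x)=m_{d-1}(x)$. Here the eigenvalue equation is used. It reads $(M\circ f)(x)=\theta f(x)^{d-1}$, and the right-hand side is $\theta\alpha^{d-1}$ on $P$ and $\theta\beta^{d-1}$ on $Q$. Solving the expansion for the top term gives
$$\gamma^{d-1}m_{d-1}(x)=\theta f(x)^{d-1}-\sum_{t\le d-2}\beta^{d-1-t}\gamma^{t}N_t(x).$$
The right-hand side is constant on each class and $\gamma\ne0$, so $m_{d-1}(x)$ is constant on $P$ and on $Q$. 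This also covers $\beta=0$, where only the top term survives.

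\emph{Inversion.} Counting pairs $(e,T)$ by the hyperedge $e$ gives $N_t(x)=\sum_{s\ge t}\binom{s}{t}m_s(x)$ for $0\le t\le d-1$. This is an upper-triangular system with unit diagonal, so binomial inversion yields $m_s(x)=\sum_{t\ge s}(-1)^{t-s}\binom{t}{s}N_t(x)$. Every $N_t(x)$ is now known to depend only on the class of $x$, hence so does every $m_s(x)$. Therefore the number of hyperedges of each color range $\{i_1,\ldots,i_d\}$ incident to $x$ of color $i_1$ is independent of $x$, and $f$ is a perfect $2$-coloring.
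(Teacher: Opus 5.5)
Your proof is correct and is essentially the paper's argument. Your identities $N_t(x)=r_{t+1}\binom{|P|-\chi_P(x)}{t}$ are exactly the paper's double-counting equations over $t$-subsets of $\alpha$-colored vertices, and isolating $\gamma^{d-1}m_{d-1}(x)$ from the eigenvalue equation is the paper's elimination step that yields the coefficient $(\alpha-\beta)^{d-1}$. Expanding $f=\beta\textbf{1}+\gamma\chi_P$ and inverting binomially simply make the paper's ``nondegenerate triangular system'' step explicit, and they handle both color classes at once rather than repeating the argument for $\beta$.
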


\begin{proof}
Consider an arbitrary vertex $x$ of color $\alpha$. Then  the equation $(M \circ f) (x) = \theta (\mathbb{I} \circ f) (x)$ is equivalent to 
\begin{equation} \label{eigeneq}
\alpha^{d-1} t_0^x + \alpha^{d-2} \beta t_{1}^x + \cdots + \beta^{d-1} t_{d-1}^x  = \theta \alpha^{d-1},
\end{equation}
where $t_{j}^x$ denotes the number of hyperedges $e \in  E$ incident to  $x$ such that $d-j$ vertices of $e$ have color $\alpha$ and  $j$ vertices of $e$  have color $\beta$. 

For every $j = 0, \ldots, d-2$, consider an arbitrary collection of  distinct vertices  $x^{1}, \ldots, x^{j}$,  all different from $x$ and all of color $\alpha$.  Let $t_k^{x,x^1, \ldots, x^j}$ denote  the number of hyperedges $e$ incident to the vertices $x, x^{1}, \ldots, x^{j}$ such that $d-k$ vertices from $e$ have color $\alpha$ and  $k$ vertices from $e$  have color $\beta$. In particular, if $d-k < j +1$, then $t_k^{x,x^1, \ldots, x^j} = 0$.   Since $\mathcal{G}$ is totally regular, double counting  the number of all hyperedges  containing the set $x, x^{1}, \ldots, x^{j}$  yields
$$t_0^{x,x^1, \ldots, x^j} + \cdots + t_{d-1 - j}^{x,x^1, \ldots, x^j} = r_{j +1}. $$

Summing this equality over  all subsets  of $j$ distinct vertices  $x^{1}, \ldots, x^{j}$ different from $x$ and  of color $\alpha$,   we get
\begin{equation} \label{systemeq}
{d-1 \choose j} t_0^{x} +{d-2 \choose j} t_1^{x} + \cdots + {j \choose j} t_{d-1 - j}^{x} =  {n_\alpha - 1 \choose j} r_{j+1}, ~~ j = 0 ,\ldots, d-2 ,
\end{equation}
where $n_{\alpha}$ denotes the number of vertices of color $\alpha$.

Equations~(\ref{systemeq}) together with equation~(\ref{eigeneq}), form a system of $d$ linear equations in the variables $t_0^{x}$, \ldots, $t_{d-1}^x$. We show that this system is nondegenerate. The equations in system~(\ref{systemeq}) are linearly independent because their coefficient matrix is upper triangular.  To eliminate the terms  $ \alpha^{d-2} \beta t_{1}^x, \ldots, \beta^{d-1} t_{d-1}^x $ from  equation~(\ref{eigeneq}),  we take a suitable linear combination of the equations in~(\ref{systemeq}), multiplying the $j$-th equation by $- \beta^{d-1 - j} (\alpha - \beta)^j$ and add them to~(\ref{eigeneq}). The resulting coefficient of $t_0^x$  equals  $(\alpha - \beta)^{d-1}$ which is nonzero since $\alpha \neq \beta$.  Therefore, the  combined system consisting of (\ref{eigeneq}) and (\ref{systemeq}) is nondegenerate and has a unique solution.

Observe that the coefficients of this system depend only on the color of $x$, not on the specific choice of the vertex.  Hence the values $t_0^{x}$, \ldots, $t_{d-1}^x$, i.e., the number of incident hyperedges of a given color range incident  to a vertex $x$ of color $\alpha$, are uniquely determined and independent of the particular vertex $x$ of that color. 

Repeating the same argument for vertices of color $\beta$, we conclude that the coloring $f$ is perfect. 

\end{proof}

\section{Designs are perfect colorings and eigenfunctions of subspace hypergraphs} \label{designsec}

In this section we, apply the results obtained above to show that partial difference sets, bent and plateaued functions,  spreads and bent partitions correspond to perfect colorings or eigenfunctions of the subspace hypergraphs $\mathcal{H}_n$ and $\mathcal{D}_n$  or to eigenfunctions of the convolution matrices.

\subsection{Partial difference sets}

Let $G$ be a finite abelian group.  A subset $D\subseteq G$  is called a \textit{difference set} with parameters $(v,k,\lambda)$ if   $|G|=v$, $|D|=k$, and for every nonzero element $ x \in G$  there exist exactly $\lambda$ pairs of elements $y_1, y_2\in D$ such that $y_1-y_2=x$.
A subset  $D\subset G $, with $0 \not\in D$,  is called a \textit{partial difference set} with parameters  $(v,k,\lambda,\mu)$ if  $|G|=v$, $|D|=k$, for every $ x\in D$ there are  exactly $\lambda$ pairs $y_1, y_2\in D$ such that    $y_1-y_2=x$, and for every nonzero $ x\in G\setminus D$  there are exactly  $\mu$ pairs $y_1, y_2\in D$ such that  $y_1-y_2=x$. Observe that a partial difference set with parameters $(v,k,\lambda,\lambda)$ is precisely  a difference set with parameters $(v,k,\lambda)$. For a survey of properties and applications of partial difference sets, see~\cite{ma.partsifset}.

We  first show that difference sets and partial difference sets in $\mathbb{F}_2^n$ correspond to eigenvectors of the convolution matrix $A$ of $\mathbb{F}_2^n$.

\begin{utv} \label{partdifeigen}
Let $A$ be the convolution matrix of the group $\mathbb{F}_2^n$ of order $v = 2^n$. If $D \subseteq \mathbb{F}_2^n$ is a partial difference set with parameters $(v,k,\lambda,\mu)$,  then  there exists an eigenfunction $f$ of $A$  of the form $f = \chi_D  + \beta\textbf{1} + \alpha \chi_{\overline{0}}$   for some $\alpha, \beta \in \mathbb{C}$.
\end{utv}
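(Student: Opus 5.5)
The plan is to reduce the eigen-equation $A\circ f=\theta(\mathbb{I}\circ f)$ to three scalar equations in the unknowns $\alpha,\beta,\theta$, and then show that this system has a complex solution. By Corollary~\ref{convaseigen}, $A\circ f=f*f$, while $\mathbb{I}\circ f=f^2$ is the pointwise square. So I need $f*f=\theta f^2$ for $f=\chi_D+\beta\textbf{1}+\alpha\chi_{\overline{0}}$.

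\emph{Step 1: the convolution side.} Since $-y=y$ in $\mathbb{F}_2^n$, $(\chi_D*\chi_D)(x)$ counts the $y\in D$ with $x+y\in D$. By the definition of a partial difference set, this equals $\lambda$ on $D$, $\mu$ on nonzero points outside $D$, and $k$ at $\overline{0}$. Hence $\chi_D*\chi_D=(\lambda-\mu)\chi_D+\mu\textbf{1}+(k-\mu)\chi_{\overline{0}}$. Combining this with the elementary rules $\chi_D*\textbf{1}=k\textbf{1}$, $\textbf{1}*\textbf{1}=v\textbf{1}$ and $g*\chi_{\overline{0}}=g$, bilinearity gives
$$f*f=(\lambda-\mu+2\alpha)\chi_D+(\mu+2\beta k+v\beta^2+2\alpha\beta)\textbf{1}+(k-\mu+\alpha^2)\chi_{\overline{0}}.$$

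\emph{Step 2: the pointwise side.} Evaluate $f^2$ on $D$, on $\mathbb{F}_2^n\setminus(D\cup\{\overline{0}\})$, and at $\overline{0}$, using $\overline{0}\notin D$. This gives $f^2=(1+2\beta)\chi_D+\beta^2\textbf{1}+(\alpha^2+2\alpha\beta)\chi_{\overline{0}}$.

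\emph{Step 3: matching coefficients.} It suffices to match the coefficients of $\chi_D$, $\textbf{1}$ and $\chi_{\overline{0}}$. This yields the system
\begin{gather*}
\lambda-\mu+2\alpha=\theta(1+2\beta),\qquad \mu+2\beta k+v\beta^2+2\alpha\beta=\theta\beta^2,\\
k-\mu+\alpha^2=\theta(\alpha^2+2\alpha\beta).
\end{gather*}
Matching is sufficient even if the three functions happen to be dependent.

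\emph{Step 4: solving the system.} I use $\theta$ as a parameter. The first equation gives $\alpha=\tfrac12(\theta(1+2\beta)-\lambda+\mu)$. Substituting into the second gives a quadratic in $\beta$:
$$(v+\theta)\beta^2+(2k+\theta-\lambda+\mu)\beta+\mu=0.$$
Substituting $\alpha$ into the third gives a second polynomial equation $P(\theta,\beta)=0$. Eliminating $\beta$ via the resultant $R(\theta)=\operatorname{Res}_\beta$ of these two polynomials produces a single polynomial equation in $\theta$. Any root $\theta_0$, together with a common root $\beta_0$ and the corresponding $\alpha_0$, gives the desired eigenfunction. The function $f$ is automatically nonzero: it takes the value $1+\beta$ on $D$ and $\beta$ off $D\cup\{\overline{0}\}$, so it cannot vanish identically when $D$ is nonempty and $D\neq \mathbb{F}_2^n\setminus\{\overline{0}\}$. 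The trivial cases $D=\emptyset$ and $D=\mathbb{F}_2^n\setminus\{\overline{0}\}$ I handle directly.

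\emph{Main obstacle.} The hard step is Step 4: guaranteeing that the elimination really yields a solution. That is, I must check that $R(\theta)$ is a nonconstant polynomial, or vanishes identically, rather than a nonzero constant. I also need the degenerate cases $v+\theta=0$ and vanishing leading coefficients to be treated separately. I would compute the leading term of $R$ in $\theta$ explicitly from the top-degree parts of the two polynomials. Before doing that, I would try special choices such as $\beta=-\tfrac12$, which forces $\alpha=\tfrac12(\mu-\lambda)$, to see whether the third equation can be satisfied directly.
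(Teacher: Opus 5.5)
Your Steps 1--3 are correct and reproduce exactly the paper's reduction to the system (\ref{eigensys}). The gap is Step 4, which is the entire content of the proposition: you never show that this system actually has a solution. With $\theta$ as the parameter you are left with a resultant $R(\theta)$ whose non-constancy you only promise to check. The trial value $\beta=-\tfrac12$ cannot replace that check. It kills the $\theta$-term in the first equation, so the first two equations already determine $\alpha$ and $\theta$. The third then becomes an extra identity in $(v,k,\lambda,\mu)$, which fails in general; for example, with $v=4$, $k=1$, $\lambda=\mu=0$ it reads $1=0$.

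The missing idea, which is what the paper uses, is to parametrize by $\beta$ instead. Subtracting $\beta$ times the first equation from the second gives $v\beta^2+(2k-\lambda+\mu)\beta+\mu=-\theta\beta(\beta+1)$, which is \emph{linear} in $\theta$. So for $\beta\notin\{0,-1\}$, both $\theta$ and then $\alpha$ are rational functions of $\beta$. The third equation becomes a single polynomial equation $\tilde P(\beta)=0$ in one variable, where the fundamental theorem of algebra applies directly; no resultant is needed.

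You still have to produce a root outside $\{0,-1\}$; the paper sets these two values aside and treats them separately. Move everything in the third equation to the left-hand side and multiply by $4\beta^3(\beta+1)^3$. This gives $\tilde P$ with $\beta^8$-coefficient $4v^2(v-1)\neq 0$ and $\tilde P(0)=\mu^3$.
\begin{itemize}
\item If $\mu\neq 0$ and all roots lay in $\{0,-1\}$, then $\tilde P=4v^2(v-1)(\beta+1)^8$. Hence $\mu^3=4v^2(v-1)>(v-1)^3\ge\mu^3$ (as $\mu\le k\le v-1$), a contradiction. A root $\beta_0\notin\{0,-1\}$ then yields $\theta$, $\alpha$ and a nonzero $f$, since $f=1+\beta_0\neq 0$ on $D$.
\item If $\mu=0$, take $\beta=0$. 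The second equation holds identically, and the first and third reduce to a cubic in $\alpha$ (with $\theta=2\alpha+\lambda$), which always has a complex root.
\end{itemize}
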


\begin{proof}
We determine conditions on the parameters $\alpha, \beta \in \mathbb{C}$   under which the function $f = \chi_D  + \beta\textbf{1} + \alpha \chi_{\overline{0}}$  is an eigenfunction   of the convolution matrix $A$. By the definition, $f$ is an eigenfunction  of $A$ corresponding to an eigenvalue $\theta$ if and only if  $A \circ f = \theta (\mathbb{I} \circ f)$. By  Proposition~\ref{convaseigen},  we have $A \circ f  = f*f$.

From the definition of convolution,  for every  set $X\subseteq \mathbb{F}_2^n$  it holds  $\textbf{1}*\chi_X =|X|  \textbf{1}$    and for every  function $g$  we have  $\chi_{\overline{0}}*g=g$. Using these identities together with the commutativity of convolution, we  expand  $f*f$ as follows:
\begin{gather*}
f*f =  \chi_D * \chi_D  +  \beta^2  (\textbf{1} * \textbf{1}) + \alpha^2    (\chi_{ \overline{0} }  * \chi_{\overline{0}})  + 2 \beta  ( \textbf{1}  * \chi_D) + 2 \alpha  (\chi_D * \chi_{\overline{0}}) + 2 \alpha \beta (\textbf{1}   * \chi_{\overline{0}})  = \\
  \chi_D * \chi_D  + 2 \alpha \chi_D   +  \beta (   v \beta  + 2  k  + 2 \alpha ) \textbf{1} + \alpha^2   \chi_{\overline{0}}.
\end{gather*}

The definition of a partial difference set $D$ with parameters $(v,k,\lambda, \mu)$ is entrywise equivalent to 
$$\chi_D*\chi_D=(\lambda-\mu)  \chi_D+ \mu  \textbf{1}+(k-\mu)  \chi_{\overline{0}}.$$

So, the equation  $A \circ f = \theta (\mathbb{I} \circ f)$ is equivalent to
$$    ( 2 \alpha  + \lambda - \mu) \chi_D   + ( \mu +\beta (   v \beta  + 2  k  + 2 \alpha ))  \textbf{1} + ( \alpha^2 +  k - \mu)  \chi_{ \overline{0} }  = \theta (\mathbb{I} \circ f). $$

Consider this matrix equation entrywise in cases $x \in D$, $x  \not\in D$ but $ x \neq \overline{0} $, and $x = \overline{0}$. Then  we get following system:
$$
\begin{array}{rl}
x \in D :  &    2 \alpha \beta  +   v \beta^2  +  2 \alpha  +   2  k \beta  + \lambda   = \theta  (1 + \beta)^2;  \\
x  \not\in D, x \neq \overline{0} : &       2 \alpha \beta +  v \beta^2  +  2  k \beta    + \mu  =   \theta \beta^2; \\
x  = \overline{0} : &   \alpha^2 +  2 \alpha \beta +  v \beta^2  +  2  k \beta     +  k   =  \theta (\alpha + \beta)^2. \\
\end{array}
$$

Subtraction of the second equation from the first and the third ones gives us

\begin{equation} \label{eigensys}
\left\{
\begin{array}{l}
 2 \alpha   + \lambda  - \mu  = \theta  (1 + 2 \beta);  \\
   2 \alpha \beta +  v \beta^2  +  2  k \beta    + \mu  =   \theta \beta^2; \\
  \alpha^2     +  k  - \mu  =  \theta (\alpha^2 +  2\alpha \beta). \\
\end{array}
\right.
\end{equation} 

Next, subtract the first equation multiplied by $\beta$ from the second one and obtain:

$$ \left\{
\begin{array}{l}
 2 \alpha   + \lambda  - \mu  = \theta  (1 + 2 \beta);  \\
    v \beta^2  +  (2  k - \lambda + \mu) \beta    + \mu    =   - \theta   \beta (\beta + 1); \\
  \alpha^2     +  k  - \mu  =  \theta (\alpha^2 +  2\alpha \beta). \\
\end{array}
\right.
$$

So in case $\beta \neq 0$ and $\beta \neq -1$,   the second equation  implies
$$ 
\theta =  - \frac{v \beta^2  +  (2  k - \lambda + \mu) \beta    + \mu }{\beta(\beta+1)}.
$$

From the first equation of the system we express $\alpha$:
$$ \alpha = \frac{1}{2} \left( \mu - \lambda  - \frac{v \beta^2  +  (2  k - \lambda + \mu) \beta    + \mu }{\beta(\beta+1)}  (1 + 2 \beta)  \right).$$

Substitution of the obtained expressions for $\alpha$ and $\theta$ into the third equation of the last system gives us a rational equation of the form $\frac{P(\beta)}{\beta (\beta +1)} = 0$, where $P(\beta)$ is a polynomial of degree $6$. Solving $P(\beta) = 0$, excluding  singular values $\beta = 0$ and $\beta = -1$, and the  substituting the resulting roots into the expressions for $\alpha$ and $\theta$, we obtain the required eigenvalues and eigenfunctions of  the matrix $A$.

Consider the case $\beta = 0$. Then the second equation of (\ref{eigensys}) implies  $\mu = 0$. Consequently, $D$ is a subgroup of $\mathbb{F}_2^n$ and  therefore $\lambda = k$. From the first equation we obtain $\theta  = 2\alpha + k$. Substituting  this expression to the third equation of $(\ref{eigensys})$  gives  $   2 \alpha^3  - \alpha^2 + 2  \alpha k   - k =  (\alpha^2 + k) (2 \alpha - 1) = 0 $ that allows us to  find $\alpha$  and  $\theta$. 

Assume now that  $\beta = -1$. From the first equation of (\ref{eigensys}) we obtain $\alpha = \frac{1}{2} (\mu - \lambda)$.  From the second equation  we derive $\theta =   v   - 2  k  + \lambda $.  Substituting these expressions into the third equation shows that such a solution exists if  the parameters $v, k, \lambda$, and $\mu$ of the partial difference set satisfy 
$$  (\mu - \lambda)^2 (2k  - v- \lambda + 1)  +  4 (\mu - \lambda)    (v - 2k + \lambda)  + 4 k  -  4\mu  =  0. $$
\end{proof}

\textbf{Example 1.} Let  $D$ be a difference set in  $\mathbb{F}_2^n$ with the McFarland parameters $(2^n, 2^{n-1} - 2^{n/2 -1}, 2^{n-2}-
2^{n/2 -1})$, where $n$ is even. Then the system (\ref{eigensys}) has the form
$$
\left\{
\begin{array}{l}
 2 \alpha   = \theta  (1 + 2 \beta);  \\
   2 \alpha \beta +  2^n \beta^2  +  ( 2^{n} - 2^{n/2}) \beta    + 2^{n-2}-
2^{n/2 -1}  =   \theta \beta^2; \\
  \alpha^2     + 2^{n-1}    - 2^{n-2}   =  \theta (\alpha^2 +  2\alpha \beta). \\
\end{array}
\right.
$$

Substituting  $\alpha = \theta (\beta + 1/2)$ from the first equation into the second and third ones gives
$$
\left\{
\begin{array}{l}
( \theta  + 2^n) \beta^2   +  ( 2^{n} - 2^{n/2} + \theta) \beta    + 2^{n-2}-  2^{n/2 -1}  = 0; \\
(\theta^3  + \theta^2 )\beta^2 +  \theta^3  \beta +    (\theta^3  -  \theta^2)/4   - 2^{n-1}    + 2^{n-2}   = 0 . \\
\end{array}
\right.
$$

It can be checked that if $\theta = 2^{n/2}$, then the second equation of the above system coincides with the first one multiplied by $2^{n/2}$.  The values of $\beta$ can be found from the following quadratic equation
$$
(2^n + 2^{n/2}) \beta^2   +  2^{n}  \beta    + 2^{n-2}-  2^{n/2 -1}  =   0 .\\
$$ 

\medskip

Note that the eigenvalue $\theta = 2^{n/2}$ for difference sets with McFarland parameters considered in this example indeed belongs to the spectrum of the convolution matrix $A_{\mathbb{F}_2^n}$ (see Theorem~\ref{F2nvalues}). Similarly, eigenvalues obtained via Proposition~\ref{partdifeigen}  for any other partial difference sets with known parameters must be eigenvalues of convolution matrices $A_{\mathbb{F}_2^n}$ for groups $\mathbb{F}_2^n$. 

Another main result of this section is that the indicator function of   a partial difference set $D \subseteq \mathbb{F}_2^n$  is a  perfect coloring of the subspace hypergraph $\mathcal{H}_n$.

\begin{theorem}  \label{difsetperfcolor}
 The set  $D \subseteq \mathbb{F}_2^n$, $\overline{0} \not\in D$, is a partial difference set  with parameters $(2^n,k,\lambda,\mu)$, $\lambda \neq \mu$, if and only if  $\chi_D$ is a perfect coloring of the subspace hypergraph $\mathcal{H}_n$  with the $3$-dimensional  parameter matrix $S$  such that
$$
\begin{array}{ll}
s_{0,0, 0} =  2^{n-1}  - k + \mu/2 -1;  &
s_{1,0, 0} = 2^{n-1}   - k + \lambda/2  ;  \\
s_{0,1, 0} = s_{0,0,1}  = \frac{1}{2} (k - \mu)  ;  &    s_{1,0,1} = s_{1,1,0} =   \frac{1}{2} (k - \lambda -1); \\
s_{0,1,1} = \mu / 2;  &
 s_{1,1,1} = \lambda /2.
\end{array}
$$
\end{theorem}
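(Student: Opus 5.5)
We will derive the statement from Lemma~\ref{colorasconv}. Color $1$ is $D$ and color $0$ is $D' = \mathbb{F}_2^n\setminus(D\cup\{\overline{0}\})$, with color matrix $F=(F_0,F_1)$, $F_1=\chi_D$, $F_0=\textbf{1}-\chi_D-\chi_{\overline{0}}$, all extended by $0$ at $\overline{0}$ as in the proof of that lemma. The lemma says that $\chi_D$ is a perfect coloring with parameter matrix $S$ if and only if, for all $j,\ell\in\{0,1\}$,
$$F_j*F_\ell = 2s_{0,j,\ell}F_0+2s_{1,j,\ell}F_1$$
holds on $\mathbb{F}_2^n\setminus\{\overline{0}\}$. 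We compare these three identities (for $(j,\ell)=(1,1),(1,0),(0,0)$) with the partial difference set identity
$$\chi_D*\chi_D=(\lambda-\mu)\chi_D+\mu\textbf{1}+(k-\mu)\chi_{\overline{0}}.$$
In $\mathbb{F}_2^n$ we have $y_1-y_2=y_1+y_2$, so this identity is entrywise the definition of a partial difference set.

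\emph{Identity $(1,1)$.} Restricted to nonzero $x$, the identity above reads $\chi_D*\chi_D=\mu F_0+\lambda F_1$. So $D$ is a partial difference set exactly when $F_1*F_1=2s_{0,1,1}F_0+2s_{1,1,1}F_1$ holds with $s_{0,1,1}=\mu/2$ and $s_{1,1,1}=\lambda/2$.

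\emph{Identities $(1,0)$ and $(0,0)$.} We use $\textbf{1}*\chi_D=k\textbf{1}$, $\chi_{\overline{0}}*g=g$ and $\textbf{1}*\textbf{1}=2^n\textbf{1}$. This gives
$$F_1*F_0=k\textbf{1}-\chi_D*\chi_D-\chi_D,$$
$$F_0*F_0=(2^n-2k-2)\textbf{1}+\chi_D*\chi_D+2\chi_D+\chi_{\overline{0}}.$$
On nonzero $x$ we substitute $\textbf{1}=F_0+F_1$ and $\chi_D*\chi_D=\mu F_0+\lambda F_1$. The coefficients should come out as $2s_{0,1,0}=k-\mu$, $2s_{1,1,0}=k-\lambda-1$, $2s_{0,0,0}=2^n-2k-2+\mu$ and $2s_{1,0,0}=2^n-2k+\lambda$, which match the table. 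This gives the direction ``partial difference set $\Rightarrow$ perfect coloring with this $S$''. The symmetric entries $s_{i,0,1}=s_{i,1,0}$ come from commutativity of convolution.

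\emph{Converse.} Suppose $\chi_D$ is perfect with the stated $S$. The $(1,1)$ identity gives $(\chi_D*\chi_D)(x)=\mu$ for nonzero $x\notin D$ and $=\lambda$ for $x\in D$, which is the definition of a partial difference set with these parameters. We also check $|D|=k$: evaluating $\chi_D*\chi_D$ at $\overline{0}$ only gives $|D|$, so instead we use the $(1,0)$ identity. At a point $x\notin D$ it gives $k'-\mu=k-\mu$, where $k'=|D|$ and we use $F_1*F_0=k'\textbf{1}-\chi_D*\chi_D-\chi_D$. Hence $k'=k$, provided $D'\neq\emptyset$; if $D'$ is empty, the point $x\in D$ gives the same conclusion. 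The hypothesis $\lambda\neq\mu$ ensures the two colors are genuinely distinguished by the parameters, so the coloring is a true $2$-coloring and the parameter matrix is determined by $D$.

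The main obstacle is the bookkeeping at $\overline{0}$. The convolution identities hold on all of $\mathbb{F}_2^n$, but Lemma~\ref{colorasconv} only constrains nonzero points. Dropping the $\chi_{\overline{0}}$ terms, and the $-1$ shifts coming from $F_0$ excluding $\overline{0}$ (which produce the $-1$ in $s_{0,0,0}$ and in $s_{1,0,1}$), has to be done carefully. The factor $2$ versus $1$ between the cases $j=\ell$ and $j\neq\ell$ in the proof of Lemma~\ref{colorasconv} is already absorbed in the uniform form $2s_{i,j,\ell}$ stated there.
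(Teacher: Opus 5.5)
Your proof is correct, but it takes a different route from the paper's.

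\textbf{What the paper does.} It does not use Lemma~\ref{colorasconv} for this theorem. It works directly with Theorem~\ref{adjparam}. For a vertex $x$ of each color it counts the numbers $t_{i,j}$ of incident hyperedges that contain $j$ points of $D$. From the definition of a partial difference set it reads off $t_{1,3}=\lambda/2$, $t_{0,2}=\mu/2$, $t_{1,2}=k-\lambda-1$ and $t_{0,1}=k-\mu$. The remaining counts $t_{1,1}$ and $t_{0,0}$ follow from the regularity of $\mathcal{H}_n$, whose degree is $2^{n-1}-1$.

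\textbf{What you do instead.} All of that information sits in the single identity for $\chi_D*\chi_D$. The regularity of $\mathcal{H}_n$ is replaced by $\textbf{1}*\chi_D=k\textbf{1}$, $\textbf{1}*\textbf{1}=2^n\textbf{1}$ and $\chi_{\overline{0}}*g=g$. The six entries then follow mechanically, and they agree with the table.

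\textbf{What each route buys.} Lemma~\ref{colorasconv} is an equivalence, so you get the converse at almost no extra cost, including recovering $|D|=k$ from the $(1,0)$ identity. The paper's proof only writes out the direction ``partial difference set $\Rightarrow$ perfect coloring''. The price is the bookkeeping at $\overline{0}$: extending by zero, and reading the identities only at nonzero points. You handle this correctly. The counting argument avoids that bookkeeping because it never leaves $\mathbb{F}_2^n\setminus\{\overline{0}\}$, and it keeps the hyperedge counts that define perfectness in plain view.

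\textbf{One sentence to drop.} The hypothesis $\lambda\neq\mu$ does not guarantee that $\chi_D$ is a genuine (surjective) $2$-coloring. Two examples show this:
\begin{itemize}
\item $D=\emptyset$ is vacuously a partial difference set with $k=\mu=0$ and arbitrary $\lambda$;
\item $D=\mathbb{F}_2^n\setminus\{\overline{0}\}$ is one with $\lambda=2^n-2$ and arbitrary $\mu$.
\end{itemize}
What is really needed, also for applying Lemma~\ref{colorasconv} in the forward direction, is that both $D$ and $D'$ are nonempty. This is implicitly assumed in the statement, and the paper ignores it too. The condition $\lambda\neq\mu$ plays no role anywhere in your argument, so that remark should go or be replaced by this explicit nondegeneracy assumption.
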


\begin{proof}
By definition, the indicator function $\chi_D$ is a coloring of $\mathcal{H}_n$ into colors $0$ and $1$. 
To prove that $\chi_D$ is a perfect coloring, we compute explicitly the numbers $t_{i,j}$, $i \in \{0,1\}$, $j \in \{ 0, 1, 2, 3 \}$, where   $t_{i,j}$ denotes the numbers of hyperedges incident to a vertex $x$ of color $i$ and  containing $j$ vertices from the set  $D$. If  these quantities  are independent of the choice of $x$, then, by the definition of a perfect coloring and Theorem~\ref{adjparam}, $\chi_D$ is a perfect coloring of $\mathcal{H}_n$ with the parameter matrix $S$ whose entries are
$$
\begin{array}{ll}
s_{0,0, 0} = t_{0,0};  &
s_{1,0, 0} = t_{1,1};  \\
s_{0,1, 0} = s_{0,0,1}  = t_{0,1} /2;  &    s_{1,0,1} = s_{1,1,0} =  t_{1,1} /2; \\
s_{0,1,1} =  t_{0,2};  &
 s_{1,1,1} = t_{1,3}.
\end{array}
$$

From the definition of a partial difference set, for every $x \in D$ there are $\lambda / 2$ hyperedges $\{ x, y, z \}$ of $\mathcal{H}_n$ such that $y, z \in D$. Indeed, in this case the number of unordered triples $\{ x,y, z \}$ such that  $x + y +z = \overline{0}$ is a half of the number of ordered pairs of $y,z \in D$ such that $x = y - z$. Similarly,  there  are $\mu / 2$ hyperedges $\{ x, y, z \}$ of $\mathcal{H}_n$ for which  $y, z \in  \mathbb{F}_2^n \setminus (D \cup \{ \overline{0}\})$.  It implies that  $t_{1,3} = \lambda/2$ and $t_{0,2} = \mu/2$ and they do not depend on the choice of the vertex $x$ but only on its color. 

On the other hand, by the definition of a partial difference set, for every $x \in D$ there are exactly $\lambda$ other $y \in D$ such that the remaining vertex $z = x + y$  from a hyperedge of $\mathcal{H}_n$ also belongs to the set $D$. Therefore, the remaining $ k - \lambda -  1 $ vertices $y \in D$ give hyperedges $\{ x,y,z \}$, where  the vertex $z$ is not from $D$. It means that $t_{1,2} = k - \lambda - 1$ and it does not depend on $x$. 

Since  $\mathcal{H}_n$ is a regular hypergraph of degree $2^{n-1} - 1$, for every vertex $x \in D$ we have
$$2^{n-1} -1 = t_{1,1} + t_{1,2} + t_{1,3}.$$
So
$$t_{1,1} = 2^{n-1} -1 - t_{1,2} - t_{1,3} =  2^{n-1}   - k + \lambda/2 $$
and $t_{1,1}$ also does not depend on $x$.

Similarly, by the definition of the partial difference set, for every $x \not\in D$, $x \neq \overline{0}$, there are exactly $\mu$ vertices $y \in D$ such that the remaining vertex $z$  from the hyperedge $\{ x,y,z \}$ of $\mathcal{H}_n$ also belongs to the set $D$. Therefore, the remaining $ k - \mu  $ vertices $y \in D$ give hyperedges $ \{ x,y,z \}$, where the vertex $z$ is not from $D$. It means that $t_{0,1} = k - \mu $ and it does not depend on $x$. 
 
 Again, since  $\mathcal{H}_n$ is a regular hypergraph of degree $2^{n-1} -1$, for every vertex $x \not\in D$, $x \neq \overline{0}$,  we have
$$ 2^{n-1} -1 = t_{0,0} +  t_{0,1} + t_{0,2} .$$
Therefore,
$$t_{0,0} =  2^{n-1} -1 - t_{0,1} - t_{0,2} = 2^{n-1}  - k + \mu/2 -1$$
and it does not depend on $x$. 

Thus we find all parameters $t_{i,j}$, $i \in \{0,1\}$, $j \in \{ 0, 1, 2, 3 \}$ and show that they are independent of the choice of the initial vertex $x$.
\end{proof}

\textbf{Example 2.} If $D$ is a difference set in $\mathbb{F}_2^n$  with the McFarland parameters $(2^n, 2^{n-1} - 2^{n/2 -1}, 2^{n-2}- 2^{n/2 -1})$, then $\chi_D$ is a perfect coloring of the subspace hypergraph $\mathcal{H}_n$ with the parameter matrix $S$ such that
$$
 \begin{array}{ll}
s_{0,0,0} = 2^{n-3} + 2^{n/2 -2} -1;  & s_{1,0,0} = 2^{n-3} + 2^{n/2-2} ;  \\
s_{0,0,1} = s_{0,1,0} = 2^{n -3};  &    s_{1,1,0} = s_{1,0,1} = 2^{n-3} - 1/2; \\
s_{0,1,1} = 2^{n-3} - 2^{n/2-2};  &  s_{1,1,1} = 2^{n-3} - 2^{n/2-2}.
\end{array}
$$

\subsection{Bent and plateaued functions}

Let $f: \mathbb{F}_2^n \rightarrow \mathbb{F}_2$ be a Boolean function. The \textit{support} of a Boolean function $f$ is the set of all $x$  such that $f(x) =1$, and the  \textit{weight} of $f$ is the size of support.  A function $f$ is a \textit{bent function} if all its Fourier coefficients $\widehat{f}(x)$ are equal to $\pm 1$.  A Boolean function $f$ is an \textit{$s$-plateaued function} if  all its Fourier coefficients   are equal to $\pm 2^{\frac{s}{2}}$ or $0$,  for some nonnegative integer $s$. In particular, $0$-plateaued  functions coincide with  bent functions. It is known that every bent function on $n$ variables has weight $2^{n-1} \pm 2^{n/2-1}$.  The cryptographic properties of bent and plateaued functions were extensively studied in~\cite{carle.boolforcrypt,mesnager.bentfunc}. 

There is an equivalent definition of bent functions and plateaued functions in terms of convolution.

\begin{lemma}\cite[Proposition 97]{carle.boolforcrypt}  \label{platdef}
A Boolean  function $f: \mathbb{F}_2^n \rightarrow \mathbb{F}_2$  is $s$-plateaued if and only if 
${(-1)^f}*{(-1)^f}*{(-1)^f}=2^{n+s}{(-1)^f}$.
\end{lemma}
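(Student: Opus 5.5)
We prove the lemma by passing to the Fourier side, using the convolution rule of Proposition~\ref{fourierutv} together with injectivity of the transform. Put $F = (-1)^f$, viewed as a real function on $\mathbb{F}_2^n$. By definition, $2^{n/2}\widehat{f}(x) = \sum_z F(z)(-1)^{(x,z)}$, so $\widehat{f}$ is, up to normalization, the character transform $\widehat{F}$ of $F$. We will use the real-valued analogues of items 1 and 3 of Proposition~\ref{fourierutv}: the transform $\widehat{g}(x) = 2^{-n/2}\sum_z g(z)(-1)^{(x,z)}$ is an involution on all functions $g:\mathbb{F}_2^n\to\mathbb{C}$, and it satisfies $\widehat{g*h} = 2^{n/2}\,\widehat{g}\,\widehat{h}$. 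With this normalization $\widehat{F} = \widehat{f}$.

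First we apply the convolution rule twice:
$$\widehat{F*F*F} = 2^{n}\,\widehat{F}^{\,3} = 2^{n}\,\widehat{f}^{\,3}.$$
Because the transform is an involution, and hence injective, the identity $F*F*F = 2^{n+s}F$ holds if and only if $2^{n}\widehat{f}^{\,3} = 2^{n+s}\widehat{f}$ pointwise. Equivalently, for every $x$,
$$\widehat{f}(x)\bigl(\widehat{f}(x)^2 - 2^{s}\bigr) = 0.$$

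Next we note that $\widehat{f}(x)$ is real. The condition therefore says exactly that $\widehat{f}(x) \in \{0, \pm 2^{s/2}\}$ for every $x$, which is the definition of an $s$-plateaued function. Since every step above is an equivalence, this proves both directions.

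The only delicate point is bookkeeping of normalizations. The transform in the paper is defined on Boolean $f$ through $(-1)^f$, while the convolution identity must be applied to arbitrary real functions. We will therefore state explicitly the version of items 1 and 3 of Proposition~\ref{fourierutv} for general functions. This is a direct computation: summing characters gives $\sum_x (-1)^{(x,z)} = 2^n\chi_{\overline{0}}(z)$, and the convolution rule follows by substituting $z = y + w$. Once that is in place, checking the factor $2^{n/2}\cdot 2^{n/2} = 2^{n}$ finishes the argument.
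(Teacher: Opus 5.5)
Your proof is correct. The paper gives no argument of its own for this lemma and only cites Carlet's Proposition~97, whose standard proof is the same Fourier-side computation you give: the transform of $(-1)^f*(-1)^f*(-1)^f$ is $2^{n}\widehat{f}^{\,3}$, and injectivity together with reality of $\widehat{f}$ reduces the identity to $\widehat{f}(x)\bigl(\widehat{f}(x)^2-2^{s}\bigr)=0$ for all $x$. Your handling of the $2^{n/2}$ normalization is right, and you correctly note that the involution and convolution rules are needed for arbitrary real functions, not only in the Boolean form in which Proposition~\ref{fourierutv} is stated.
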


This property implies that   if $f$ is  a plateaued function, then the function $(-1)^f$  is a eigenfunction of the $2$-times convolution matrix.

\begin{theorem} \label{plateigen}
Let $A^{(2)}$ be the  $2$-times convolution matrix of $\mathbb{F}_2^n$ and  let $f: \mathbb{F}_2^n \rightarrow \mathbb{F}_2$  be a Boolean function. Then $f$ is an $s$-plateaued function  if and only if  $(-1)^f$   is an eigenfunction of the matrix $A^{(2)}$ with eigenvalue $2^{n+s}$. 
\end{theorem}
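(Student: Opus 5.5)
The plan is to combine Corollary~\ref{convaseigen} with Lemma~\ref{platdef}. Put $g=(-1)^f$, so $g$ takes only the values $\pm 1$. Corollary~\ref{convaseigen} with $t=2$ gives $A^{(2)}\circ g = g*g*g$. By the definition of eigenvalues, $g$ is an eigenfunction of the $4$-dimensional matrix $A^{(2)}$ with eigenvalue $\theta$ exactly when $A^{(2)}\circ g=\theta(\mathbb{I}\circ g)$. Here $\mathbb{I}$ is the $4$-dimensional identity matrix, so $(\mathbb{I}\circ g)(x)=g(x)^3$.

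The key observation is that $g^3=g$ pointwise, because $g(x)\in\{1,-1\}$. Hence $\mathbb{I}\circ g=g$, and the eigen-equation becomes $g*g*g=\theta g$. Also $g$ is never zero, so it is a legitimate (nonzero) eigenvector.

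\emph{Forward direction.} If $f$ is $s$-plateaued, Lemma~\ref{platdef} gives $g*g*g=2^{n+s}g$. Rewriting this as $A^{(2)}\circ g=2^{n+s}(\mathbb{I}\circ g)$ shows that $g$ is an eigenfunction with eigenvalue $2^{n+s}$.

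\emph{Converse.} If $g$ is an eigenfunction with eigenvalue $2^{n+s}$, then the same identities give $g*g*g=2^{n+s}g$. Lemma~\ref{platdef} then says $f$ is $s$-plateaued.

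There is no real obstacle. The only point to check is that the cubing in $\mathbb{I}\circ g$ is harmless for $\pm1$-valued functions. We should also note that Lemma~\ref{platdef} is applied with the same $s$, which requires $s$ to be a nonnegative integer as in the definition of plateaued functions.
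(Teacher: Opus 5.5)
Your proof is correct and follows the paper's argument exactly: Corollary~\ref{convaseigen} rewrites $A^{(2)}\circ(-1)^f$ as the triple convolution, the identity $\mathbb{I}\circ(-1)^f=(-1)^f$ (because $g^3=g$ for $\pm1$-valued $g$) removes the cubing, and Lemma~\ref{platdef} then gives both directions. Your remark that $(-1)^f$ is nowhere zero, and hence an admissible eigenvector, is a small detail the paper leaves implicit.
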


\begin{proof}
The statement follows from Proposition~\ref{convaseigen}, Lemma~\ref{platdef}, and the fact that for the $4$-dimensional identity matrix $\mathbb{I}$ it holds $\mathbb{I} \circ (-1)^f = (-1)^f $. 
\end{proof}

Next, using the results of Section~\ref{relationsec}, we show that plateaued functions are perfect colorings of the subspace hypergraph $\mathcal{D}_n$.

\begin{theorem}
 If  a Boolean function $f: \mathbb{F}_2^n \rightarrow \mathbb{F}_2$  is $s$-plateaued, then the  $2$-coloring $f$ of the vertices of the subspace hypergraph  $\mathcal{D}_{n}$ is  perfect. 
\end{theorem}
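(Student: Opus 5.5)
The plan is to reduce the statement to Theorem~\ref{eigencolor}, which says that a two-valued eigenfunction of a totally regular hypergraph is a perfect $2$-coloring. Put $g = (-1)^f$, so $g(x) \in \{1,-1\}$ for all $x$. By Theorem~\ref{plateigen}, $g$ is an eigenfunction of $A^{(2)}$ with eigenvalue $2^{n+s}$, that is,
$$A^{(2)} \circ g = 2^{n+s} g.$$
Also $\mathbb{I}\circ g = g^3 = g$.

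\textbf{Step 1: pass from $A^{(2)}$ to $M_D$.} Apply Proposition~\ref{2convhyper4} to $g$. Since $g^2(y) = 1$ for every $y$, we have $\sum_{y \neq x} g^2(y) = 2^n - 1$, and $(\mathbb{I}\circ g)(x) = g(x)$. The identity therefore becomes
$$2^{n+s} g(x) = 6 (M_D \circ g)(x) + 3(2^n-1) g(x) + g(x),$$
which gives
$$M_D \circ g = \frac{2^{n+s} - 3\cdot 2^n + 2}{6}\, g.$$
Because $g^3 = g$, the right-hand side equals $\theta (\mathbb{I}\circ g)$ with $\theta = (2^{n+s} - 3\cdot 2^n + 2)/6$. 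So $g$ is an eigenfunction of the adjacency matrix $M_D$ of $\mathcal{D}_n$. It is nonzero and takes exactly the two values $\pm1$, with $\alpha = 1 \neq \beta = -1$.

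\textbf{Step 2: apply Theorem~\ref{eigencolor}.} As noted before that theorem, $\mathcal{D}_n$ is a $4$-uniform totally regular hypergraph, since its hyperedges form the Steiner quadruple system $S(3,4,2^n)$. Theorem~\ref{eigencolor} then says that $g$ defines a perfect $2$-coloring of $\mathcal{D}_n$. The color classes of $g$ are exactly the supports of $f$ and $1+f$, so the coloring $f$ itself is perfect.

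\textbf{Main obstacle: the case where $f$ is constant.} The only step needing care is whether $g$ really takes both values, since Theorem~\ref{eigencolor} requires a genuine $2$-coloring. If $f$ is constant, the coloring has a single color and is trivially perfect. (Theorem~\ref{eigencolor} also applies only as long as $\alpha \neq \beta$ and every vertex's value lies in $\{\alpha,\beta\}$.) Beyond this, the argument is just the bookkeeping in Step 1: checking the coefficients $6$, $3(2^n-1)$ and $1$ in Proposition~\ref{2convhyper4} when $g^2 \equiv 1$.
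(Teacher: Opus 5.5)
Your proof is correct and follows the paper's argument essentially step for step. You pass to $(-1)^f$, use Theorem~\ref{plateigen} and Proposition~\ref{2convhyper4} to obtain $M_D \circ (-1)^f = \frac{1}{6}(2^{n+s}-3\cdot 2^n+2)\,(\mathbb{I}\circ(-1)^f)$, and then invoke Theorem~\ref{eigencolor} using the total regularity of $\mathcal{D}_n$. Your separate treatment of constant $f$ (which is $n$-plateaued and gives a trivially perfect one-color partition) covers a point the paper leaves implicit.
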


\begin{proof}
Note that  functions $f$ and $(-1)^f$ define the same coloring of the hypergraph $\mathcal{D}_n$.  By Theorem~\ref{plateigen}, the function $(-1)^f$ is an eigenfunction for the $2$-times convolution matrix $A^{(2)}$ with eigenvalue $2^{n+s}$.

Let $M_D$ be the adjacency matrix of the hypergraph $\mathcal{D}_n$. 
 By Proposition~\ref{2convhyper4} and the  equality  $\mathbb{I} \circ (-1)^f = (-1)^f $, for all $x \in \mathbb{F}_2^n$ we have
\begin{gather*} 2^{n+s}  (\mathbb{I} \circ (-1)^f) (x) =  (A^{(2)} \circ (-1)^f) (x) =  \\
6 (M_{D} \circ (-1)^f) (x)  + 3  (2^n - 1)  \cdot (-1)^f (x) +  (\mathbb{I} \circ (-1)^f) (x). 
\end{gather*}
Consequently,
$$  M_{D} \circ (-1)^f = \frac{1}{6} ( 2^{n+s} - 3 \cdot 2^{n} +2) (\mathbb{I} \circ (-1)^f)$$
which means that $(-1)^f$ is a two-valued eigenfunction of the hypergraph $\mathcal{D}_n$. 
Since the subspace hypergraph $\mathcal{D}_{n}$ is totally regular, Theorem~\ref{eigencolor} implies that $(-1)^f$ is a perfect $2$-coloring.
\end{proof}

It is known  (see, e.g.,~\cite{carle.boolforcrypt}) that the support  of a Boolean bent function is a  difference set with the McFarland parameters in $\mathbb{F}_2^n$. Our previous results imply that they are perfect $2$-colorings of the subspace hypergraph $\mathcal{H}_n$.

\begin{utv}
 A Boolean function $f : \mathbb{F}_2^n \rightarrow \mathbb{F}_2$  is a bent function of weight $2^{n-1} - 2^{n/2 -1}$ if and only if it is a perfect $2$-coloring  of the subspace hypergraph $\mathcal{H}_n$ with the parameter matrix $S$ such that
$$
 \begin{array}{ll}
s_{0,0,0} = 2^{n-3} + 2^{n/2 -2} -1;  & s_{1,0,0} = 2^{n-3} + 2^{n/2-2} ;  \\
s_{0,0,1} = s_{0,1,0} = 2^{n -3};  &    s_{1,1,0} = s_{1,0,1} = 2^{n-3} - 1/2; \\
s_{0,1,1} = 2^{n-3} - 2^{n/2-2};  &  s_{1,1,1} = 2^{n-3} - 2^{n/2-2}.
\end{array}
$$
\end{utv}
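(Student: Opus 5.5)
The plan is to reduce the statement to Theorem~\ref{difsetperfcolor} and Example~2, using the classical equivalence between bent functions and McFarland difference sets. For the forward direction, let $f$ be bent of weight $2^{n-1}-2^{n/2-1}$ and let $D=\supp(f)$. First I will arrange that $\overline{0}\notin D$, which is needed because the vertex set of $\mathcal{H}_n$ excludes $\overline{0}$ and Theorem~\ref{difsetperfcolor} requires $\overline{0}\notin D$. For this I use the known fact that the support of a bent function is a difference set with parameters $(2^n,2^{n-1}-2^{n/2-1},2^{n-2}-2^{n/2-1})$. A difference set with $\lambda=\mu$ and $\overline{0}\notin D$ is a partial difference set with $\lambda=\mu$.

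Here is the point needing care. Theorem~\ref{difsetperfcolor} assumes $\lambda\neq\mu$, but for a difference set $\lambda=\mu$. I will therefore not invoke that theorem as stated. Instead I reuse its proof, which never used $\lambda\neq\mu$: the counts $t_{1,3}=\lambda/2$, $t_{1,2}=k-\lambda-1$, $t_{0,2}=\mu/2$, $t_{0,1}=k-\mu$, and the two complements from $(2^{n-1}-1)$-regularity all go through verbatim. Substituting $k=2^{n-1}-2^{n/2-1}$ and $\lambda=\mu=2^{n-2}-2^{n/2-1}$ into the formulas gives exactly the matrix of Example~2, which is the matrix $S$ in the statement. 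A quick check: $s_{0,0,0}=2^{n-1}-k+\mu/2-1=2^{n/2-1}+2^{n-3}-2^{n/2-2}-1=2^{n-3}+2^{n/2-2}-1$, and the other entries follow similarly.

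There is still the issue of whether $f(\overline{0})=0$. A bent function of this weight need not vanish at $\overline{0}$, and the statement treats $f$ as a coloring of $\mathcal{H}_n$, i.e.\ of its restriction to nonzero vectors. If $f(\overline 0)=1$, the restricted support has size $k-1$ and the difference-set counts change. So I expect the intended reading is that $\overline{0}\notin\supp f$, matching the hypothesis $\overline{0}\notin D$ in Theorem~\ref{difsetperfcolor}, or else that $f$ is extended by $f(\overline{0})=0$ as in Lemma~\ref{colorasconv}. I will state this convention explicitly. Pinning down this convention is the main obstacle; the rest is bookkeeping.

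For the converse, suppose the restriction of $f$ to $\mathbb{F}_2^n\setminus\{\overline 0\}$, extended by $f(\overline 0)=0$, is a perfect $2$-coloring with this $S$. By Theorem~\ref{adjparam}, a vertex of color $1$ lies in $s_{1,1,1}=2^{n-3}-2^{n/2-2}$ hyperedges inside $D$, so it has $\lambda=2s_{1,1,1}$ representations $x=y_1+y_2$ with $y_i\in D$. Likewise, a vertex of color $0$ gives $\mu=2s_{0,1,1}$, which is equal to $\lambda$. The size $k$ is recovered from $s_{0,0,1}=(k-\mu)/2$, giving $k=2^{n-1}-2^{n/2-1}$. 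Hence $D$ is a difference set with McFarland parameters. Then $\chi_D*\chi_D=\lambda\textbf{1}+(k-\lambda)\chi_{\overline0}$. Applying Proposition~\ref{fourierutv} to $(-1)^f=\textbf{1}-2\chi_D$ gives $|\widehat f(x)|^2=1$ for all $x$, so $f$ is bent. Its weight is $k$ by construction.
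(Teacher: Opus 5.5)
Your proposal is correct and follows the paper's route. The paper's proof is just the reduction to Theorem~\ref{difsetperfcolor} and Example~2, via the classical correspondence between bent functions and McFarland difference sets.

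Your extra care is warranted, not superfluous. Theorem~\ref{difsetperfcolor} formally assumes $\lambda\neq\mu$, and its proof writes out only the forward direction. The statement also genuinely needs the convention $f(\overline{0})=0$: for a bent $f$ with $f(\overline{0})=1$, each vertex of the color-$1$ class $D\setminus\{\overline{0}\}$ lies in only $(\lambda-2)/2$ monochromatic hyperedges rather than $\lambda/2$, so the stated $S$ cannot occur. Reusing the counting argument and proving the converse by the Fourier transform, as you do, fills the gaps left by the paper's one-line citation.
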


\begin{proof}
The statement follows  from Theorem~\ref{difsetperfcolor} and Example 2.
\end{proof}

\subsection{Spreads}

 A \textit{$k$-spread} is a  partition  $\{ P_1, \ldots, P_m\}$, where $m = \frac{2^n -1}{2^k - 1}$, of the set $\mathbb{F}^n_2\setminus\{ \overline{0}\}$ into $k$-dimensional linear subspaces $P_j$. In this paper we, consider only   $n/2$-spreads of $\mathbb{F}^n_2$ for even $n$. It is well known~\cite[Proposition 80]{carle.boolforcrypt} that a union of  $2^{n/2 - 1}$ subspaces of an $n/2$-spread  of $\mathbb{F}^n_2$  is the support of a Boolean bent function.  For  constructions of  $n/2$-spreads,  see~\cite{Bu.spacepart,Lavrauw.spreadsite}. 

 The main result of this section is that  $n/2$-spreads of $\mathbb{F}^n_2$ are perfect colorings of the subspace hypergraph $\mathcal{H}_n$.

\begin{theorem} \label{spreadascolor}
Let $n$ be even.  A partition   $P = \{ P_1, \ldots, P_m\}$, $m = 2^{n/2} +1$, of  $\mathbb{F}^n_2 \setminus \{ \overline{0}\}$ is an $n/2$-spread  if and only if it defines a  perfect coloring of  the subspace hypergraph $\mathcal{H}_n$ with the parameter matrix $S$ whose entries are:
\begin{itemize}
\item $s_{i,j,k} = 1/2$ for all distinct $i,j,k \in \{ 1, \ldots, m\}$;  
\item $s_{i,i,j} = s_{i,j,i} = s_{j,i,i} = 0$   for all distinct $i,j \in \{ 1, \ldots, m\}$; 
\item $s_{i,i,i} =  2^{n/2 - 1} -1$  for  all $i \in \{ 1, \ldots, m\}$. 
\end{itemize}
\end{theorem}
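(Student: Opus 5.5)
Both directions reduce to counting, for a vertex $x$ of color $i$, the hyperedges $\{x,y,x+y\}$ of each color range, and then converting these counts into entries of $S$ via Theorem~\ref{adjparam}. For $d=3$ the conversion is: $s_{i,j,\ell}=v_{i,\{i,j,\ell\}}/2$ when $j\neq\ell$, and $s_{i,j,j}=v_{i,\{i,j,j\}}$.

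For the forward direction, let $P$ be an $n/2$-spread and $x\in P_i$. The hyperedges through $x$ are the $2^{n-1}-1$ sets $\{x,y,x+y\}$ with $y\neq \overline{0},x$.
\begin{itemize}
\item If $y\in P_i$, then $x+y\in P_i$ because $P_i$ is a subspace. This gives $(2^{n/2}-2)/2=2^{n/2-1}-1$ hyperedges of range $\{i,i,i\}$, so $s_{i,i,i}=2^{n/2-1}-1$.
\item If $y\in P_j$ with $j\neq i$, then $x+y$ lies in neither $P_i$ nor $P_j$: for instance $x+y\in P_i$ would force $y\in P_i$. Hence $x+y\in P_\ell$ for some third index $\ell$. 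Consequently no hyperedge has range $\{i,i,j\}$ or $\{i,j,j\}$, so $s_{i,i,j}=s_{i,j,i}=s_{j,i,i}=0$.
\item Fix distinct $j,\ell\neq i$. The map $y\mapsto x+y$ is an injection from $P_j$ into $\mathbb{F}_2^n\setminus(P_i\cup P_j)$. Given $y\in P_j$, the element $x+y$ lies in $P_\ell$ iff $x\in P_j+P_\ell$. Since $P_j\oplus P_\ell=\mathbb{F}_2^n$ (two distinct spread elements meet trivially and have complementary dimension), $x$ decomposes uniquely as $y+z$ with $y\in P_j$, $z\in P_\ell$, both nonzero. So there is exactly one hyperedge of range $\{i,j,\ell\}$ through $x$, and therefore $s_{i,j,\ell}=1/2$.
\end{itemize}
All counts depend only on the colors, so the coloring is perfect with the stated $S$.

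For the converse, suppose the partition is a perfect coloring with this $S$. Since $s_{i,i,j}=0$, no hyperedge contains exactly two vertices of the same class $P_i$. So for $x,y\in P_i$ distinct we get $x+y\in P_i$, and $P_i\cup\{\overline{0}\}$ is a linear subspace. From $s_{i,i,i}$, a vertex of $P_i$ lies in $2^{n/2-1}-1$ monochromatic hyperedges, which gives $|P_i|-1=2^{n/2}-2$ and hence $|P_i|=2^{n/2}-1$. Each class is therefore an $n/2$-dimensional subspace minus $\overline{0}$, and the partition is an $n/2$-spread; the count $m=2^{n/2}+1$ is automatically consistent. Alternatively, Lemma~\ref{colorasconv} gives the identity $F_i*F_i=2s_{i,i,i}F_i+\dots$ supported on $P_i$, which yields closure directly.

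The main obstacle is the forward count of exactly one hyperedge of range $\{i,j,\ell\}$. It rests on the complementarity $P_j\oplus P_\ell=\mathbb{F}_2^n$, which must be justified from the trivial intersection of spread elements and the dimension count.
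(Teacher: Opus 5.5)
Your proof is correct. The forward direction is essentially the paper's local count, including the use of $P_j\oplus P_\ell=\mathbb{F}_2^n$ for the entries $s_{i,j,\ell}$. Your unique-decomposition phrasing gives existence and uniqueness of the hyperedge of range $\{i,j,\ell\}$ in one step. One sentence is misphrased: in ``$x+y\in P_\ell$ iff $x\in P_j+P_\ell$'' the right-hand side does not depend on $y$. What you mean is that $x+y\in P_\ell$ iff $y$ is the $P_j$-component of $x$, and your next sentence uses it correctly.

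The converse is where you genuinely diverge from the paper. The paper never uses the zero entries $s_{i,i,j}$:
\begin{itemize}
\item it obtains equal class sizes $2^{n/2}-1$ from the symmetry of $S$;
\item it counts $\frac13(2^{n/2}-1)(2^{n/2-1}-1)$ two-dimensional subspaces inside each $P_i\cup\{\overline{0}\}$;
\item it proves by induction on $n$ that a set $P\ni\overline{0}$ contains at most $\frac13(|P|-1)(\frac{|P|}{2}-1)$ such subspaces, with equality only for linear subspaces. The induction step uses a Fourier argument to find a hyperplane containing more than half of $P$.
\end{itemize}
You instead observe that $s_{i,i,j}=0$ forbids any hyperedge with exactly two vertices in one class. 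This immediately gives additive closure of $P_i\cup\{\overline{0}\}$, and $s_{i,i,i}$ then fixes $|P_i|=2^{n/2}-1$.

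Your route takes a few lines, needs neither the class sizes nor the value of $m$, and avoids the induction entirely. The paper's route proves a stronger extremal statement that uses only the diagonal entries together with the class sizes. So it would still apply if the vanishing of the mixed entries were not part of the hypothesis.

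Even that situation can be handled locally. The monochromatic hyperedges through $x\in P_i$ pair up disjoint elements $\{y,x+y\}$ of $P_i\setminus\{x\}$, so $v_{i,\{i,i,i\}}\le(|P_i|-1)/2$. Equality forces $x+y\in P_i$ for every $y\in P_i\setminus\{x\}$.
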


\begin{proof}
Let $f$ be a coloring of  $\mathcal{H}_n$ such that $f (x) = i$ if and only if $x \in P_i$.  We show that $f$ is a perfect coloring by explicitly computing the numbers $v_{i,j,k}$ of hyperedges of $\mathcal{H}_n$ with the color range $\{ i,j,k \}$  incident to a vertex $x$ of color $i$. This reasoning is essentially equivalent to computing the convolutions $F_i * F_j$ of the indicator functions $F_i$ of the color classes $P_i$ (see Lemma~\ref{colorasconv}).

The value $v_{i,i,i}$ equals the number of $2$-dimensional linear subspaces of the $n/2$-dimensional linear subspace that contain a given vertex $x$. Hence, $v_{i,i,i} = 2^{n/2 - 1} -1$ and, by Theorem~\ref{adjparam}, $s_{i,i,i} = 2^{n/2 - 1} -1$.

Note that if two vertices from a hyperedge of $\mathcal{H}_n$ belong to one subspace $P_j$, then the third one also belongs to $P_i$. Therefore, there are no hyperedges of color ranges  $\{ i,i,j\}$ and $\{ i,j,j \}$ for $i\neq j$, thus   $v_{i,i,j} = v_{i,j,j} = 0$ and consequently $s_{i,i,j} = s_{i,j,j} = 0$. 

Let $i,j,k$ be pairwise distinct. Suppose that there exist non-equal $y_1, y_2 \in P_j$ and $z_1, z_2 \in P_k$ such that $x + y_1 + z_1 = \overline{0}$ and $x + y_2 + z_2 = \overline{0}$. Then   $y_1  - y_2 = z_2-z_1 \neq \overline{0}$ which implies that $P_j \cap P_k \neq \{\overline{0} \}$, a contradiction. Consequently, for every $j \neq i$ the set $x + P_j$ intersects any other subspace $P_k$ in at most one element. Since  $\{ P_1, \ldots, P_m\}$ is an $n/2$-spread, the direct sum $P_i \oplus P_j$ coincides with $\mathbb{F}^n_2$  for all  $i \neq j$.   Thus, for every $y \in P_j$ there is exactly one $k\neq i,j$ and exactly one $z \in P_k$ such that $x + y +z =\overline{0}$. Therefore, $v_{i,j,k} = 1$  and, by Theorem~\ref{adjparam}, $s_{i,j,k} = 1/2$  if  $i,j,k$ are pairwise distinct. 

Now we prove that every perfect coloring $f$ of the subspace hypergraph $\mathcal{H}_n$ with the parameter matrix $S$ is an $n/2$-spread. By the symmetry of entries of the parameter matrix $S$, the coloring $f$  defines a  partition  $\{ P_1, \ldots, P_m\}$  of the vertex set of $\mathcal{H}_n$ into $m = 2^{n/2} + 1$ subsets of equal size $2^{n/2}-1$. Moreover,  each set $P_i \cup \{ \overline{0}\}$ contains exactly  $ \frac{1}{3} (2^{n/2}-1) v_{i,i,i} = \frac{1}{3} (2^{n/2}-1) (2^{n/2 - 1} -1)$ $2$-dimensional subspaces of $\mathbb{F}_2^n$.

To complete the proof of the theorem it suffices to  show that every subset   $P\subseteq \mathbb{F}^n_2$ with $\overline{0}\in P$  contains at most $\frac{1}{3}(|P|-1)(\frac{|P|}{2}-1)$  $2$-dimensional subspaces with equality if and only if $P$ is linear subspace. 

We prove this statement by induction on $n$.  The base of induction $n = 2$ is trivial. The statement is also clear if $P = \mathbb{F}^n_2$. Hence, it remains to consider the statement for proper subsets $P$ of $\mathbb{F}^n_2$.  

For the induction step,  we need the following claim.

\textbf{Claim.}  Let $P$ be a proper subset of  $\mathbb{F}^n_2$ with $\overline{0}\in P$.  Then there exists an $(n-1)$-dimensional subspace  (hyperspace) $\Gamma \subset \mathbb{F}^n_2$ such that $|\Gamma\cap P| > | (\mathbb{F}^n_2\setminus\Gamma)\cap P|$.

\textit{Proof of Claim.}  Each hyperspace $\Gamma$  in   $\mathbb{F}^n_2$ is uniquely  determined by a nonzero orthogonal vector $z$. Denote  $\Gamma_z= \{x\in\mathbb{F}^n_2: (x,z)=0\}$. By the definition of the Fourier transform,   $\widehat{{\chi}_P}(z)= \frac{1}{2^{n/2}} ( |\Gamma_z\cap P|-|(\mathbb{F}^n_2\setminus\Gamma_z)\cap P| )$ for $z \neq \overline{0}$ and $\widehat{{\chi}_P}(\overline{0})=\frac{1}{2^{n/2}} |P|$.  It is also not hard to see that  $\sum\limits_{z \in\mathbb{F}^n_2}\widehat{{\chi}_P}(z)=2^{n/2} \cdot {\chi}_P(\overline{0})=2^{n/2}$.  If $|P| \neq 2^n$, then there exists   $z\neq \overline{0}$ such that $\widehat{{\chi}_P}(z)>0$, which implies $|\Gamma_z\cap P| > | (\mathbb{F}^n_2\setminus\Gamma_z)\cap P|$. This proves the claim.

Returning to the proof of the theorem, let $\Gamma_z$ be a hyperspace such that $|\Gamma_z \cap P| > | (\mathbb{F}^n_2\setminus\Gamma_z)\cap P|$. Denote $p_1 = |\Gamma_z\cap P|$  and $p_2 = | (\mathbb{F}^n_2\setminus\Gamma_z)\cap P|$, $|P| = p_1 + p_2$.
If $p_1 = |P|$, then $P \subseteq \Gamma_z$, and identifying $\Gamma_z$ and $\mathbb{F}_2^{n-1}$, the result follows from the inductive assumption.

Suppose now that $ 0 < p_2 < p_1 < |P|$.  We estimate the number of $2$-dimensional subspaces contained in $P$.  In any  $2$-dimensional  subspace  $\{ \overline{0}, x_1, x_2, x_3\}$, either none or exactly two of the nonzero elements lie outside $\Gamma_z$. Indeed, if $x_i$ and $x_j$ are not from $\Gamma_z$, i.e., $(x_i, z) = (x_j, z) = 1$, then $x_i +x_j$ belongs to $\Gamma_z$ because  $(x_i + x_j, z) =0$. In particular, every  pair of elements  from $\mathbb{F}^n_2\setminus\Gamma_z$ is contained in at most one $2$-dimensional subspace in  $P$.  

Consequently,  the number of $2$-dimensional subspaces in $P$ does not exceed the number of such subspaces in $P \cap \Gamma_z$ plus  the number of unordered pairs of elements from $(\mathbb{F}^n_2\setminus\Gamma_z)\cap P$. By the inductive assumption, this number is at most
$$\frac{1}{3}(p_1-1)(\frac{p_1}{2}-1)+\frac{1}{2}p_2(p_2-1).$$
Since this quantity is strictly less than
$\frac{1}{3}(p_1+p_2-1)(\frac{p_1+p_2}{2}-1) = \frac{1}{3}(|P|-1)(\frac{|P|}{2}-1)  $ whenever $p_1>p_2>0$, the proof is complete. 
\end{proof}

\subsection{Bent partitions}

A partition $P= \{U,P_1,\ldots,P_K\}$ of $\mathbb{F}^n_2$ is called  a \textit{normal bent partition of depth $K$}, if every function $f$ satisfying the following properties is bent:
\begin{enumerate}
\item  For each  $c\in  \{ 0,1 \}$, the preimage $f^{-1} (c)$ consists of  exactly  $K/2$   sets among  $P_1,\dots,P_K$;
\item The function $f$ is constant on the set $U$.
\end{enumerate}

The following property of  normal bent partitions was proved in~\cite{AnKaMei.bentpart}.

\begin{utv} \cite[Lemma 1]{AnKaMei.bentpart} 
Let $P= \{U,P_1,\ldots,P_K\}$  be    a normal bent partition of $\mathbb{F}^n_2$ of depth $K$. Then $K = 2^k$ for some $k$,  the set $U$ is an  $n/2$-dimensional affine subspace of $\mathbb{F}^n_2$, and  all sets $P_i$, $1\leq i\leq 2^k$, have the same size: $|P_i | = \frac{2^{n} - 2^{n/2}}{2^k}$.
\end{utv}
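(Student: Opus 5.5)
The plan is to test the defining property of a normal bent partition on many bent functions and compare their weights and Walsh coefficients. We use two standard facts about a bent $f$: its weight is $2^{n-1}\pm 2^{n/2-1}$, and $W_f(a)=\sum_x(-1)^{f(x)+(a,x)}=\pm 2^{n/2}$ for every $a$. Throughout, write $u=|U|$ and $p_i=|P_i|$. For a $K/2$-subset $T\subseteq\{1,\ldots,K\}$ and $c\in\{0,1\}$, let $f_{T,c}$ be the function equal to $1$ on $\bigcup_{i\in T}P_i$, equal to $c$ on $U$, and $0$ elsewhere. By hypothesis every $f_{T,c}$ is bent. I assume $K\ge 2$, since for $K=0$ the definition is vacuous, and I assume the parts of a partition are nonempty.

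\emph{Step 1: sizes.} The weight of $f_{T,c}$ is $\sum_{i\in T}p_i+cu$. Both $c=0$ and $c=1$ give weights in $\{a_0,\,a_0+2^{n/2}\}$, where $a_0=2^{n-1}-2^{n/2-1}$. Since $u>0$, this forces $u=2^{n/2}$ and $\sum_{i\in T}p_i=a_0$ for every $T$. Now swap one index $i\in T$ for some $j\notin T$; the sum is unchanged, so $p_i=p_j$ for all $i,j$. Hence $Kp_i=2^n-2^{n/2}$, which is the claimed size once $K$ is known.

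\emph{Step 2: $U$ is an affine subspace.} The functions $f_{T,0}$ and $f_{T,1}$ differ exactly on $U$. Therefore $W_{f_{T,0}}(a)-W_{f_{T,1}}(a)=2\sum_{x\in U}(-1)^{(a,x)}=2^{n/2+1}\widehat{\chi_U}(a)$, with the paper's normalization of the Fourier transform. The left side lies in $\{0,\pm 2^{n/2+1}\}$, so $\widehat{\chi_U}(a)\in\{0,\pm1\}$ for every $a$. Parseval gives $\sum_a\widehat{\chi_U}(a)^2=|U|=2^{n/2}$, so exactly $2^{n/2}$ vectors $a$ satisfy $|\sum_{x\in U}(-1)^{(a,x)}|=|U|$. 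For each such $a$, the character $(a,\cdot)$ is constant on $U$. Let $V$ be the set of these $a$. It is closed under addition, so it is a subspace of dimension $n/2$. Thus $U$ lies in a single coset of $V^\perp$, and $|V^\perp|=2^{n/2}=|U|$, so $U$ equals that coset.

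\emph{Step 3: $K$ is a power of two.} I expect this to be the main obstacle. Set $S_i(a)=\sum_{x\in P_i}(-1)^{(a,x)}$. Then $W_{f_{T,c}}(a)=(-1)^c\Sigma_U(a)+\sum_i\varepsilon_iS_i(a)$, where $\Sigma_U(a)=\sum_{x\in U}(-1)^{(a,x)}$ and $\varepsilon_i=-1$ for $i\in T$, $+1$ otherwise. Swapping $i\in T$ with $j\notin T$ changes the Walsh value by $\pm2(S_i(a)-S_j(a))$. Because Walsh values lie in $\{\pm2^{n/2}\}$, this gives $S_i(a)-S_j(a)\in\{0,\pm2^{n/2}\}$.

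Next, pick a nonzero $a\in V$, so that $\Sigma_U(a)=\pm2^{n/2}$ and $\sum_iS_i(a)=-\Sigma_U(a)$. Combining the two choices of $c$ with all balanced sign vectors, I would try to show that all $S_i(a)$ are equal to a common value $S$. The idea is that any two-level distribution of values differing by $2^{n/2}$ produces, for a suitable $T$, a Walsh value outside $\{\pm2^{n/2}\}$. This case analysis is the delicate point. If it succeeds, $KS=\mp2^{n/2}$, so $K$ divides $2^{n/2}$ and $K=2^k$. That completes the proof.
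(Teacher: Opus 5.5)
The paper gives no proof of this statement; it is quoted from the literature, so I judge your proposal on its own.

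\textbf{Steps 1 and 2 are correct.} Comparing the weights of $f_{T,0}$ and $f_{T,1}$ gives $|U|=2^{n/2}$ and equal sizes of the $P_i$. The Parseval argument shows that exactly $2^{n/2}$ characters are constant on $U$, so $U$ is a coset of $V^\perp$; this is sound.

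\textbf{Step 3 has a gap.} You only announce the claim that $S_i(a)$ does not depend on $i$ for $a\in V\setminus\{\overline{0}\}$ (``I would try \ldots\ If it succeeds''). So you have not actually proved that $K$ is a power of two.

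\textbf{The missing idea.} No case analysis of two-level distributions is needed. The key observation is that $f_{T,0}$ and $f_{T,1}$ have \emph{opposite} Walsh values at such $a$. By your own Step 2 computation, $W_{f_{T,0}}(a)-W_{f_{T,1}}(a)=2\Sigma_U(a)=\pm 2^{n/2+1}$. Two elements of $\{\pm 2^{n/2}\}$ that differ by $\pm 2^{n/2+1}$ sum to zero. Hence
\[
\sum_i \varepsilon_i S_i(a)=\tfrac12\bigl(W_{f_{T,0}}(a)+W_{f_{T,1}}(a)\bigr)=0
\]
for every balanced $T$. Comparing $T$ with $(T\setminus\{i\})\cup\{j\}$ then gives $2\bigl(S_j(a)-S_i(a)\bigr)=0$.

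So all $S_i(a)$ equal one integer $S$. Using $\sum_{x\in\mathbb{F}_2^n}(-1)^{(a,x)}=0$ for $a\neq\overline{0}$, you get $KS=\sum_i S_i(a)=-\Sigma_U(a)=\mp 2^{n/2}$. Such a nonzero $a\in V$ exists because $\dim V=n/2\ge 1$. Thus $K\mid 2^{n/2}$, and with this step inserted the proof is complete. For linear $U$ one has $V=U^\perp$, and this step is exactly the first item of Proposition~\ref{bentpartForier}.

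A minor point: the case $K=0$ is not vacuous but impossible, since it would force the constant functions to be bent.
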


 It is known~\cite{carle.boolforcrypt} that an affine transformation  of  the argument  of a bent function is again a bent function. Therefore, every affine transformation of $\mathbb{F}^n_2$ preserves the property of a partition being a normal bent partition. Hence, in what follows we may assume   that  $U$ is a linear subspace.  
 
As noted in~\cite{AnKaMei.bentpart}, a  normal  bent partition $\{U,P_1,\ldots, P_{2^{n/2}}\}$  of $\mathbb{F}^n_2$ of depth $2^{n/2}$ is equivalent to an $n/2$-spread. However,  there exist normal bent partitions of depth $2^{n/2}$ that are different from spreads~\cite{MeidlPir.bentforspred}.  If a  partition $R =  \{U,R_1,\ldots, R_{2^{k-m}}\}$ is obtained  from a normal bent partition $P = \{U,P_1,\ldots, P_{2^{k}}\}$ by  uniting arbitrary groups of $2^m$ subsets $P_i$,  where $m < k$,  then $R$ is also a normal bent partition. Meanwhile,  not every normal bent partition  can be obtained in this way.  

The main aim of this section is to show that certain bent partitions are equitable partitions of the subspace hypergraph $\mathcal{H}_n$. For this purpose we need  additional properties of strong bent partitions.

\begin{utv}\cite[Lemma 6]{AnKaMei.bentpart}  \label{bentpartForier}
Let $\{U,P_1,\dots, P_{2^k}\}$ be a normal bent partition of $\mathbb{F}^n_2$. Then $\widehat{\chi_{P_i}} (z)$ takes the following values:
\begin{itemize}
\item If $z\in U^\perp\setminus \{\overline{0}\}$, then $\widehat{{\chi}_{P_i}}(z)=-1/2^k$  for all $i=1,\ldots,2^k$.
\item   $\widehat{{\chi}_{P_i}}(\overline{0})=\frac{2^{n/2} - 1}{2^k}$  for all $i=1,\ldots,2^k$. 
\item For any other $z\not\in U^\perp$, one of two possibilities occurs:
\begin{enumerate}
\item  There exists an unique index $j$ such that  $\widehat{{\chi}_{P_i}}(z)=-1/2^k$ for all $i$,  except for  $j$, where $\widehat{{\chi}_{P_j}}(z)=1-1/2^k$.  We will say that such $z$  belongs to a subset $ R_j$.
\item   There exists an unique index $j$ such that  $\widehat{{\chi}_{P_i}}(z)=1/2^k$ for all $i$,  except for $j$, where $\widehat{{\chi}_{P_j}}(z)=1/2^k-1$. 
\end{enumerate}
\end{itemize}   
\end{utv}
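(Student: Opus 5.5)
We prove the statement (Lemma~6 of~\cite{AnKaMei.bentpart}) by exploiting the bentness of every admissible function $f$ through its Fourier transform. As in the text, we assume that $U$ is a linear subspace of dimension $n/2$, and we fix the constant value of $f$ on $U$ to be $0$. Since all $P_i$ have size $(2^n-2^{n/2})/2^k$, the value $\widehat{\chi_{P_i}}(\overline{0}) = |P_i|/2^{n/2} = (2^{n/2}-1)/2^k$ is immediate. By Proposition~\ref{fourierutv}(5) we also have $\widehat{\chi_U} = \chi_{U^\perp}$.

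For a subset $I \subseteq \{1,\ldots,2^k\}$ with $|I| = 2^{k-1}$, consider the function $f_I$ with support $\bigcup_{i\in I}P_i$. Using $(-1)^{f_I} = \textbf{1} - 2\chi_{\mathrm{supp} f_I}$ and linearity of the Fourier transform, for $z\neq\overline{0}$ we obtain
$$\widehat{f_I}(z) = -2\sum_{i\in I} a_i(z), \qquad a_i(z) := \widehat{\chi_{P_i}}(z).$$
The identity $\chi_U + \sum_i \chi_{P_i} = \textbf{1}$ gives $\sum_i a_i(z) = -\chi_{U^\perp}(z)$ for $z \ne \overline{0}$. Bentness of every $f_I$ means that $\sum_{i\in I} a_i(z) = \pm 1/2$ for every half-size subset $I$.

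The key combinatorial step is the following. Let $a_1,\ldots,a_K$ be reals with $K=2^k\ge 2$ such that every $K/2$-subset sum equals $\pm 1/2$. We classify the solutions. First, swapping one element between $I$ and its complement shows that any two $a_i$ differ by an element of $\{0,\pm 1\}$, since subset sums differ by $0$ or $\pm1$. Second, if three distinct values occurred, the swaps would produce subset sums outside $\{\pm 1/2\}$. Hence the $a_i$ take at most two values $c$ and $c+1$. Counting then forces either all $a_i$ to be equal to $c$ with $(K/2)c = \pm 1/2$, or exactly one $a_j$ to differ from the rest; with two or more of each value, subsets containing different numbers of the larger value give sums differing by $1$ and by $2$, which is impossible. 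Combining this with the constraint on the total sum:
\begin{itemize}
\item For $z\in U^\perp\setminus\{\overline{0}\}$, the total $\sum_i a_i = -1$ forces all $a_i = -1/2^k$.
\item For $z\notin U^\perp$, the total is $0$. All-equal values would give $0 = \pm 1/2$, which is impossible, so exactly one index $j$ is exceptional. Writing $a_i = c$ for $i\ne j$ and $a_j = c\pm1$, we get $2^k c \pm 1 = 0$, i.e. $c = \mp 1/2^k$. This yields exactly the two listed cases, and the exceptional index $j$ is unique.
\end{itemize}

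We expect the main obstacle to be carrying out this classification cleanly, in particular the small case $K=2$. There the set of half-subsets is just $\{1\}$ and $\{2\}$, so $a_1,a_2\in\{\pm 1/2\}$. The sum constraint then gives $a_1=a_2=-1/2$ for $z\in U^\perp\setminus\{\overline{0}\}$, and $\{a_1,a_2\}=\{1/2,-1/2\}$ otherwise. This matches both formulas, but the uniqueness of $j$ is only up to which case one calls it, so this case must be checked separately. For $K\ge 4$, I would make the swap argument rigorous as follows. For any $i\ne i'$, there is a half-subset containing $i$ but not $i'$. Swapping $i$ and $i'$ changes the sum by $a_{i'}-a_i\in\{0,\pm1\}$. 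A short case analysis then excludes the configurations ruled out above.
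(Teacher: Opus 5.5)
The paper does not prove this statement; it quotes it from \cite[Lemma 6]{AnKaMei.bentpart}, so there is no in-paper argument to compare with. On its own terms, your proof is correct and makes the section self-contained. It uses only three inputs: the bentness of the $\binom{2^k}{2^{k-1}}$ functions $f_I$ with $f_I|_U=0$; the facts $|P_i|=(2^n-2^{n/2})/2^k$ and $\dim U=n/2$ from \cite[Lemma 1]{AnKaMei.bentpart}; and the paper's standing assumption that $U$ is linear. The following steps are all right:
\begin{itemize}
\item the reduction to $\sum_{i\in I}a_i(z)=\pm\frac12$ for $z\neq\overline{0}$;
\item the identity $\sum_i a_i(z)=-\chi_{U^\perp}(z)$;
\item the swap argument showing at most two values $c,c+1$ occur;
\item the counting argument showing that if both values occur, one of them occurs exactly once, since otherwise half-subsets realize three consecutive counts of the larger value.
\end{itemize}

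One line is missing in the case $z\in U^\perp\setminus\{\overline{0}\}$. The total sum $-1$ forces $a_i=-1/2^k$ only after you exclude the configuration with an exceptional index. This follows directly from the subset sums. Suppose $a_i=c$ for $i\neq j$ and $a_j=c+\varepsilon$ with $\varepsilon=\pm1$. A half-subset avoiding $j$ and one containing $j$ give $2^{k-1}c\in\{\pm\frac12\}$ and $2^{k-1}c+\varepsilon\in\{\pm\frac12\}$, hence $2^{k-1}c=-\varepsilon/2$. Then $c=-\varepsilon/2^k$ and $\sum_i a_i=2^kc+\varepsilon=0\neq-1$.

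The same computation shows that the subset-sum condition alone already pins down the two listed value patterns. The total-sum identity is needed only to decide which configuration occurs on $U^\perp\setminus\{\overline{0}\}$ and which occurs off $U^\perp$. Your handling of $K=2$ is fine: there both possibilities hold simultaneously, and $j$ is unique within each.
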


If, for a normal bent partition $P = \{U,P_1,\dots, P_{2^k}\}$,   the second possibility  in the last case of Proposition~\ref{bentpartForier} never occurs for any $z\not\in U^\perp$, then   $P$ is  called  a  \textit{strong bent partition}.  Some constructions of   strong bent partitions can be found in~\cite{AnKaMei.bentpart}. Moreover, in that paper a correspondence between strong bent partitions  and associative schemes was established. It was also shown that if $\{U,P_1,\dots, P_k\}$ is a strong bent partition of $\mathbb{F}^n_2$, then  for each   $i \in \{ 1,\ldots,k\}$  the set  $P_i$ is a partial difference set.

To prove  that  a strong bent partition yields  a perfect coloring of $\mathcal{H}_n$, we need the following corollary of Proposition~\ref{bentpartForier}.

\begin{utv} \label{dualpart}
If $\{U,P_1,\dots, P_{2^k}\}$ is a strong bent partition of $\mathbb{F}^n_2$, then there exist sets $R_1, \ldots, R_{2^k}$ such that  $\widehat{{\chi}_{P_j}}={\chi}_{R_j}-\frac{1}{2^k}{\bf 1} + \frac{2^{n/2}}{2^{k}} \chi_{\overline{0}}$ and  $\{U^\perp,R_1,\dots, R_{2^k}\}$ is a strong bent partition of $\mathbb{F}^n_2$.
\end{utv}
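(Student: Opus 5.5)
Define $R_j$ as in Proposition~\ref{bentpartForier}: the set of $z\notin U^\perp$ at which $\widehat{\chi_{P_j}}(z)=1-1/2^k$. For a strong bent partition the second alternative never occurs. Hence every $z\notin U^\perp$ lies in exactly one $R_j$, and the sets $U^\perp, R_1,\dots,R_{2^k}$ partition $\mathbb{F}_2^n$.

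\textbf{Step 1: the Fourier formula.} Reading off the values of $\widehat{\chi_{P_j}}$ case by case gives $\widehat{\chi_{P_j}}=\chi_{R_j}-\frac{1}{2^k}\mathbf 1+\frac{2^{n/2}}{2^k}\chi_{\overline 0}$.
\begin{itemize}
\item On $U^\perp\setminus\{\overline 0\}$ the right-hand side equals $-1/2^k$, as it should.
\item At $\overline 0$ it equals $-1/2^k+2^{n/2}/2^k=(2^{n/2}-1)/2^k$, as it should.
\item On $R_j$ it equals $1-1/2^k$, and on $R_i$ with $i\ne j$ it equals $-1/2^k$, both as they should.
\end{itemize}

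\textbf{Step 2: the sizes of the $R_j$.} Apply $\widehat{\cdot}$ to the formula and use the involution property together with $\widehat{\mathbf 1}=2^{n/2}\chi_{\overline 0}$ and $\widehat{\chi_{\overline 0}}=2^{-n/2}\mathbf 1$ (Proposition~\ref{fourierutv}). This gives
\[
\chi_{P_j}=\widehat{\chi_{R_j}}-\tfrac{2^{n/2}}{2^k}\chi_{\overline 0}+\tfrac{1}{2^k}\mathbf 1 .
\]
Evaluating at $\overline 0$, where $\chi_{P_j}(\overline 0)=0$ because $\overline 0\in U$, yields $|R_j|/2^{n/2}=(2^{n/2}-1)/2^k$. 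So $|R_j|=(2^n-2^{n/2})/2^k$ and all $R_j$ have equal size.

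\textbf{Step 3: $\{U^\perp,R_1,\dots,R_{2^k}\}$ is a normal bent partition.} The same identity gives
\[
\widehat{\chi_{R_j}}=\chi_{P_j}-\tfrac{1}{2^k}\mathbf 1+\tfrac{2^{n/2}}{2^k}\chi_{\overline 0},
\]
which has the same shape as the formula in Step 1 with the roles of $P$ and $R$, and of $U$ and $U^\perp=(U^\perp)^\perp$ (both linear of dimension $n/2$), exchanged. Now take $f$ constant $c$ on $U^\perp$ whose support meets the $R_j$ in $2^{k-1}$ of them, indexed by $J$. Then $(-1)^f=(-1)^c\chi_{U^\perp}+\sum_j\epsilon_j\chi_{R_j}$ with $\sum_j\epsilon_j=0$. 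By linearity of the Fourier transform on real functions, and using $\widehat{\chi_{U^\perp}}=\chi_U$ (Proposition~\ref{fourierutv}(5)),
\[
\widehat{(-1)^f}=(-1)^c\chi_U+\sum_j\epsilon_j\chi_{P_j}-\tfrac{\sum_j\epsilon_j}{2^k}\mathbf 1+\cdots ,
\]
where the $\mathbf 1$ and $\chi_{\overline 0}$ terms vanish because $\sum_j\epsilon_j=0$. Since $U,P_1,\dots,P_{2^k}$ partition $\mathbb{F}_2^n$, the result takes only the values $\pm1$, so $f$ is bent. Hence the dual partition is normal bent.

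\textbf{Step 4: strongness.} By Step 3, $\widehat{\chi_{R_j}}(z)$ for $z\notin U$ equals $1-1/2^k$ if $z\in P_j$ and $-1/2^k$ otherwise. This is always the first alternative of Proposition~\ref{bentpartForier}, so the second never occurs and the dual partition is strong.

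The main obstacle is bookkeeping: keeping the normalization $2^{n/2}$ in the transform consistent, and remembering that Proposition~\ref{fourierutv} is stated for Boolean functions and must be applied as linear identities on real-valued functions.
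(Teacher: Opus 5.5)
Your proof is correct and follows the paper's argument step for step. You obtain the sets $R_j$ from Proposition~\ref{bentpartForier}, get their size by inverting the Fourier transform at $\overline{0}$, and deduce strongness from the inverted identity $\widehat{\chi_{R_j}}=\chi_{P_j}-\frac{1}{2^k}\mathbf 1+\frac{2^{n/2}}{2^k}\chi_{\overline 0}$, exactly as the paper does. The only minor difference is in Step~3: you transform an arbitrary function built on the dual partition and check its $\pm1$ spectrum directly, while the paper transforms the bent functions built on $\{U,P_1,\dots,P_{2^k}\}$ and invokes that the dual of a bent function is bent. Your direction is slightly more self-contained, and it also handles both constant values on $U^\perp$ without a separate complementation remark.
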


\begin{proof}
Since $\{U,P_1,\dots, P_{2^k}\}$ is a strong bent partition,   Proposition~\ref{bentpartForier}  implies the existence of sets  $R_1, \ldots, R_{2^k}$ such that  $\widehat{{\chi}_{P_j}}={\chi}_{R_j}-\frac{1}{2^k}{\bf 1} + \frac{2^{n/2}}{2^{k}} \chi_{\overline{0}}$. 
Using this equality together with Proposition~\ref{fourierutv}, we find that all sets  $R_j$ have the same size:
$$|R_j| = 2^{n/2}  \cdot \widehat{\chi_{R_j}} (\overline{0})   =  2^{n/2} \left( \chi_{P_j} (\overline{0}) +\frac{1}{2^k} \widehat{{\bf 1}} (\overline{0})  - \frac{2^{n/2}}{2^{k}}  \widehat{\chi_{\overline{0}}} (\overline{0})   \right) =  \frac{2^n - 2^{n/2}}{2^k}.  $$

It remains to show that $\{U^\perp,R_1,\dots, R_{2^k}\}$  is a strong bent partition. We first prove that  $\{U^\perp,R_1,\dots, R_{2^k}\}$ is a normal bent partition. 

The condition that $\{U,P_1,\dots, P_{2^k}\}$  is a normal bent partition can be reformulated as follows:  for  every partition $I\cup J$ of the index set $\{1,\dots,2^k\}$   with  $|I|=|J|$,  the Boolean function $f$ defined by the equation
$$(-1)^f={\chi}_U+ \sum\limits_{i\in I} {\chi}_{P_i}-\sum\limits_{j\in J}{\chi}_{P_j}$$ is bent. Applying the Fourier transform to the both sides yields
$$\widehat{(-1)^f} =\widehat{{\chi}_U} + \sum\limits_{i\in I} \widehat{{\chi}_{P_i}}-\sum\limits_{j\in J} \widehat{{\chi}_{P_j}} = {\chi}_{U^\perp}+ \sum\limits_{i\in I} {\chi}_{R_i}-\sum\limits_{j\in J}{\chi}_{R_j}.$$

It is well known~\cite{carle.boolforcrypt} that $f$ is a bent if and only if  there exists a Boolean function $f'$ such  that $\widehat{(-1)^f}=(-1)^{f'}$ and $f'$ is also a bent.  Hence, the above equality implies that  $\{U^\perp,R_1,\dots, R_{2^k}\}$ is a normal bent partition.  

The fact that  $\{U^\perp,R_1,\dots, R_{2^k}\}$ is a strong bent partition follows from equalities $ \widehat{\chi_{R_j}}   =     \chi_{P_j} - \frac{1}{2^{k}}  \textbf{1} +\frac{2^{n/2}}{2^k}  \chi_{\overline{0}} $, $j = 1, \ldots, 2^k$, which are obtained by an application of the Fourier transform to the similar relations for $\widehat{\chi_{P_j}}$.
\end{proof}

We call the strong bent partition  $\{U^\perp,R_1,\dots, R_{2^k}\}$ constructed in Proposition~\ref{dualpart}  the \textit{dual bent partition} of $\{U,P_1,\dots, P_{2^k}\}$.

We are now ready to prove the main result of this section.

\begin{theorem} \label{bentpartascolor}
If $\{U,P_1,\ldots, P_{2^k}\}$ is a strong bent partition of $\mathbb{F}^n_2$, then the  coloring of the subspace hypergraph $\mathcal{H}_n$   into $2^k+1$  colors  $\{U \setminus \{ \overline{0}\},P_1,\ldots, P_{2^k}\}$  is a perfect coloring.
\end{theorem}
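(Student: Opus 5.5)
The plan is to use Lemma~\ref{colorasconv}. Set $F_0=\chi_{U\setminus\{\overline{0}\}}$ and $F_i=\chi_{P_i}$ for $i=1,\ldots,2^k$. It suffices to show that for all $j,\ell\in\{0,1,\ldots,2^k\}$ the convolution $F_j*F_\ell$, restricted to $\mathbb{F}_2^n\setminus\{\overline{0}\}$, is a linear combination of $F_0,F_1,\ldots,F_{2^k}$. Lemma~\ref{colorasconv} then yields perfectness, with parameters read off from the coefficients. The value at $\overline{0}$ is irrelevant, since the vertex set excludes $\overline{0}$ and the lemma extends the $F_i$ by zero there.

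We work in the Fourier domain, using Proposition~\ref{fourierutv}(3): $\widehat{F_j*F_\ell}=2^{n/2}\widehat{F_j}\widehat{F_\ell}$. Write $u=2^{n/2}=|U|$ and $K=2^k$. By Proposition~\ref{fourierutv}(5),(4),
\[
\widehat{F_0}=\widehat{\chi_U}-\widehat{\chi_{\overline{0}}}=\chi_{U^\perp}-2^{-n/2}\textbf{1}.
\]
By Proposition~\ref{dualpart},
\[
\widehat{F_i}=\chi_{R_i}-\tfrac1K\textbf{1}+\tfrac uK\chi_{\overline{0}},
\]
where $\{U^\perp,R_1,\ldots,R_K\}$ is a partition of $\mathbb{F}_2^n$. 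So every $\widehat{F_i}$ lies in the span $\mathcal{A}$ of $\chi_{U^\perp\setminus\{\overline{0}\}},\chi_{\overline{0}},\chi_{R_1},\ldots,\chi_{R_K}$, which also contains $\textbf{1}$. This span is closed under pointwise multiplication, because it consists of the functions constant on the cells of a partition. Hence every product $\widehat{F_j}\widehat{F_\ell}$ lies in $\mathcal{A}$.

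Next we check that the Fourier transform maps $\mathcal{A}$ onto the span $\mathcal{B}$ of $\chi_{U\setminus\{\overline{0}\}},\chi_{\overline{0}},\chi_{P_1},\ldots,\chi_{P_K}$. Applying Proposition~\ref{dualpart} to the dual partition (its last line) gives $\widehat{\chi_{R_j}}\in\mathcal{B}$. Also $\widehat{\chi_{U^\perp}}=\chi_U$ and $\widehat{\chi_{\overline{0}}}=2^{-n/2}\textbf{1}\in\mathcal{B}$. Since $\widehat{\widehat{g}}=g$ (Proposition~\ref{fourierutv}(1), which extends by linearity to real functions), we get $F_j*F_\ell=2^{n/2}\,\widehat{\widehat{F_j}\widehat{F_\ell}}\in\mathcal{B}$.

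Finally we restrict to $\mathbb{F}_2^n\setminus\{\overline{0}\}$. There $\chi_{\overline{0}}$ vanishes, so $F_j*F_\ell=\sum_i c_{i,j,\ell}F_i$. By Lemma~\ref{colorasconv} the coloring is perfect, with $s_{i,j,\ell}=c_{i,j,\ell}/2$.

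The main obstacle is bookkeeping rather than any deep step. One must justify that the Fourier properties stated for Boolean functions apply to arbitrary real linear combinations of indicators; this holds because the transform is linear and involutive. One must also confirm that $\mathcal{A}$ really is an algebra: the cells $U^\perp\setminus\{\overline{0}\}$, $\{\overline{0}\}$, $R_1,\ldots,R_K$ are disjoint and cover $\mathbb{F}_2^n$, which holds because the dual is a partition containing $U^\perp$. If explicit parameters were wanted, one would expand the products using $\chi_{R_i}\chi_{R_j}=\delta_{ij}\chi_{R_i}$ and the disjointness of $U^\perp$ from the $R_i$. That is routine and not needed for perfectness.
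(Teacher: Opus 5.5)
Your proof is correct and follows the paper's route: Lemma~\ref{colorasconv}, combined with the convolution theorem, the involutivity of the Fourier transform, and the dual partition from Proposition~\ref{dualpart}. The paper explicitly computes $\chi_{P_i}*\chi_{P_j}$, $\chi_{U\setminus\{\overline{0}\}}*\chi_{P_i}$ and $\chi_{U\setminus\{\overline{0}\}}*\chi_{U\setminus\{\overline{0}\}}$, which also yields the parameter matrix; you replace these calculations with the cleaner observation that the span of the dual cells is closed under pointwise products and is mapped by the Fourier transform into the span of the primal cells.
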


\begin{proof}
By Lemma~\ref{colorasconv}, the partition $\{U \setminus \{ \overline{0}\},P_1,\ldots, P_{2^k}\}$ of  $\mathbb{F}_2^n$  is a perfect coloring of $\mathcal{H}_n$ if and only if the convolution of any two indicator functions of these sets can be expressed as a linear combination of all color indicator functions.  Thus, it suffices to compute the convolutions ${\chi}_{P_i}*{\chi}_{P_j}$,  $ \chi_{U \setminus \{ \overline{0}\}} * \chi_{P_i}$, and $ \chi_{U \setminus \{ \overline{0}\}}* \chi_{U \setminus \{ \overline{0}\}}$.   Since the vertex set of  $\mathcal{H}_n$  is $\mathbb{F}_2^n \setminus \{ \overline{0}\}$, the values of these convolution  at $\overline{0}$ are irrelevant.   

Consider ${\chi}_{P_i}*{\chi}_{P_j}$ for distinct $i,j \in \{ 1,\dots,2^k \}$.    Using  the properties of the Fourier transform from Proposition~\ref{fourierutv}  and the existence of  the dual strong bent  partition $\{U^\perp,R_1,\dots, R_k\}$ for which 
\begin{equation} \label{preq}
\widehat{{\chi}_{P_j}}={\chi}_{R_j}-\frac{1}{2^k}{\bf 1} + \frac{2^{n/2}}{2^{k}} \chi_{\overline{0}},
\end{equation}
 by Proposition~\ref{dualpart}, we have
\begin{gather*}
\widehat{{\chi}_{P_i}*{\chi}_{P_j}}  = 2^{n/2} \cdot  \widehat{{\chi}_{P_i}} \cdot \widehat{{\chi}_{P_j}} =  2^{n/2} \cdot \left({\chi}_{R_i}-\frac{1}{2^k}{\bf 1} + \frac{2^{n/2}}{2^{k}} \chi_{\overline{0}}\right)  \left({\chi}_{R_j}-\frac{1}{2^k}{\bf 1} +\frac{2^{n/2}}{2^{k}} \chi_{\overline{0}}\right)  = \\
  - \frac{2^{n/2}}{2^k} ({\chi}_{R_i}  + {\chi}_{R_j}) + \frac{2^{n/2}}{2^{2k}}{\bf 1}   + \frac{2^{3n/2} - 2^{n+1}}{2^{2k}}  \chi_{\overline{0}}.
\end{gather*}

Consequently, by  Proposition~\ref{fourierutv} and equation~(\ref{preq}), we have
\begin{gather*}
{\chi}_{P_i}*{\chi}_{P_j}= \widehat{\widehat{{\chi}_{P_i}*{\chi}_{P_j}}} =   - \frac{2^{n/2}}{2^k} (\widehat{{\chi}_{R_i}}  +\widehat{{\chi}_{R_j}}) + \frac{2^{n/2}}{2^{2k}} \widehat{{\bf 1}}   + \frac{2^{3n/2} - 2^{n+1}}{2^{2k}}   \widehat{\chi_{\overline{0}}} = \\
- \frac{2^{n/2}}{2^k} ({\chi}_{P_i}  +{\chi}_{P_j})   + \frac{2^{n}}{2^{2k}}   ( \textbf{1}  -  \chi_{\overline{0}} ). 
\end{gather*}

If $i = j$, then 
\begin{gather*}
\widehat{{\chi}_{P_i}*{\chi}_{P_i}}  = 2^{n/2} \cdot  \widehat{{\chi}_{P_i}}^2=  2^{n/2} \cdot \left({\chi}_{R_i}-\frac{1}{2^k}{\bf 1} + \frac{2^{n/2}}{2^{k}} \chi_{\overline{0}}\right)^2 = \\
2^{n/2}   \left(1 - \frac{2}{2^k}\right) \chi_{R_i} + \frac{2^{n/2}}{2^{2k}} \textbf{1} + \frac{2^{3n/2} - 2^{n+1} } {2^{2k}} \chi_{\overline{0}}
\end{gather*}
and
\begin{gather*}
{\chi}_{P_i}*{\chi}_{P_i}= \widehat{\widehat{{\chi}_{P_i}*{\chi}_{P_i}}} =  2^{n/2}   \left(1 - \frac{2}{2^k}\right) \widehat{\chi_{R_i}} + \frac{2^{n/2}}{2^{2k}} \widehat{\textbf{1}} + \frac{2^{3n/2} - 2^{n+1} } {2^{2k}} \widehat{\chi_{\overline{0}}} = \\
2^{n/2}   \left(1 - \frac{2}{2^k}\right)  \chi_{P_i}  +
  \left( \frac{2^{n}} {2^{2k}}  - \frac{2^{n/2}}{2^k}  \right)  \textbf{1}  + 
   \left(\frac{2^{n}}{2^k} - \frac{2^{n}}{2^{2k}}\right)  \chi_{\overline{0}} .
\end{gather*}

 Next,  for each $i \in \{ 1,\dots,2^k \}$  consider $\chi_{U \setminus \{ \overline{0}\}} * \chi_{P_i}$.  Since $U$  is an $n/2$-dimensional linear subspace,  Proposition~\ref{fourierutv}  implies that $\widehat{\chi_U} = \chi_{U^\perp}$.  Using the fact that $\chi_U = \chi_{U \setminus \{ \overline{0}\}} + \chi_{\overline{0}}$, Proposition~\ref{fourierutv}, and equation~(\ref{preq}), we get that
\begin{gather*}
\widehat{\chi_{U \setminus \{ \overline{0}\}} * \chi_{P_i}} = 2^{n/2} \cdot  (\widehat{\chi_U}  - \widehat{\chi_{\overline{0}}})  \widehat{\chi_{P_i}} =  2^{n/2} \cdot \left(\chi_{U^\perp}  - \frac{1}{2^{n/2}} \textbf{1} \right)  \left({\chi}_{R_i}-\frac{1}{2^k}{\bf 1} + \frac{2^{n/2}}{2^{k}}  \chi_{\overline{0}}\right) = \\
 -\frac{2^{n/2}}{2^k}  \chi_{U^\perp}  - \chi_{R_i} + \frac{1}{2^k} \textbf{1}  + \frac{2^{n} - 2^{n/2}}{2^{k}}  \chi_{\overline{0}}
\end{gather*}
and 
\begin{gather*}
\chi_{U \setminus \{ \overline{0}\}} * \chi_{P_i} = \widehat{\widehat{\chi_{U \setminus \{ \overline{0}\}} * \chi_{P_i}}} =   -\frac{2^{n/2}}{2^k}  \chi_{U \setminus \{ \overline{0}\}}  - \chi_{P_i}   + \frac{2^{n/2} }{2^{k}}  \textbf{1}  -\frac{2^{n/2}}{2^k}  \chi_{\overline{0}}  .
\end{gather*}

At last, consider the function $\chi_{U \setminus \{ \overline{0}\}} * \chi_{U \setminus \{ \overline{0}\}}$. Then 
$$\widehat{\chi_{U \setminus \{ \overline{0}\}} * \chi_{U \setminus \{ \overline{0}\}}} = 2^{n/2}  \cdot ( \chi_{U^\perp}  - \widehat{\chi_{\overline{0}}})^2  =   (2^{n/2} - 2) \chi_{U^\perp} + \frac{1}{2^{n/2}} \textbf{1} $$
and 
$$\chi_{U \setminus \{ \overline{0}\}} * \chi_{U \setminus \{ \overline{0}\}} = \widehat{\widehat{\chi_{U \setminus \{ \overline{0}\}} * \chi_{U \setminus \{ \overline{0}\}}}} =  (2^{n/2} - 2) \chi_{U \setminus \{ \overline{0}\}} +   (2^{n/2} - 1)  \chi_{\overline{0}}.$$

Consequently,   ${\chi}_{P_i}*{\chi}_{P_j}$,  $ \chi_{U \setminus \{ \overline{0}\}} * \chi_{P_i}$, and $ \chi_{U \setminus \{ \overline{0}\}}* \chi_{U \setminus \{ \overline{0}\}}$ evaluated at points $x \neq \overline{0}$ can be expressed as  linear combinations of the indicator functions  $\chi_{U \setminus \{ \overline{0}\}}$ and $\chi_{P_\ell}$, $\ell =1, \ldots, 2^k$. Thus, by Lemma~\ref{colorasconv},  we have that $\{\chi_{U \setminus \{ \overline{0}\}},P_1,\ldots, P_{2^k}\}$  is a perfect coloring of $\mathcal{H}_n$.

\end{proof}

\section*{Acknowledgements}
The work of Anna Taranenko is supported by the state contract of the Sobolev Institute of Mathematics (project no. FWNF-2026-0011).

\end{document}